\documentclass[letterpaper, twoside, 11pt]{amsart}
\usepackage[T2A]{fontenc}
\usepackage[utf8]{inputenc}	
\usepackage[main=english, russian]{babel}
\linespread{1.1}

\usepackage{geometry}
\geometry{
	letterpaper,
	left=25mm,
	top=25mm,
}

\usepackage{comment}

%References
\usepackage{xcolor}
\definecolor{forestgreen}{HTML}{228B22}
\definecolor{royalblue}{HTML}{4169E1}

\usepackage[unicode, pdftex]{hyperref}
\hypersetup{
	colorlinks   = true, %Colours links instead of ugly boxes
	urlcolor     = blue, %Colour for external hyperlinks
	linkcolor    = royalblue, %Colour of internal links
	citecolor    = forestgreen %Colour of citations
}

\usepackage[noabbrev,capitalise,nameinlink]{cleveref}
\usepackage[abbrev,alphabetic,backrefs,initials,nobysame]{amsrefs}

%Math
\usepackage{amssymb}
\usepackage{tikz-cd}
\usepackage[all]{xy}

%Theorems
\newtheorem{theorem}{Theorem}[section]
\newtheorem{proposition}[theorem]{Proposition}
\newtheorem{lemma}[theorem]{Lemma}

\newtheorem{conjecture}[theorem]{Conjecture}

\theoremstyle{definition}
\newtheorem{definition}[theorem]{Definition}

\theoremstyle{definition}
\newtheorem{example}[theorem]{Example}

\theoremstyle{remark}
\newtheorem{remark}[theorem]{Remark}

%Fields
\newcommand{\R}{\mathbb{R}}
\newcommand{\Ko}{\mathbb{C}}

%Classes
\newcommand{\Cc}{\mathfrak{C}}
\newcommand{\Dd}{\mathfrak{D}}

%Families
\newcommand{\X}{\mathcal{X}}
\newcommand{\B}{\mathcal{B}}
\newcommand{\Z}{\mathcal{Z}}
\newcommand{\Y}{\mathcal{Y}}
\newcommand{\Hilb}{\mathbf{Hilb}_{X}}

%sheaves

\newcommand{\Li}{\mathcal{L}}

\newcommand{\Pp}{\mathbb{P}}
\newcommand{\Iso}{\text{Iso}}
\newcommand{\al}{\alpha}

\DeclareMathOperator{\supp}{supp}
\DeclareMathOperator{\E}{Exc}

%Emptyset

\let\emptyset\varnothing

\title{On the finiteness of log surfaces}
\author[D. V. Serebrennikov]{Daniil Serebrennikov}
\address{Department of Mathematics, Johns Hopkins University, Baltimore, MD 21218, USA.}
\email{dserebr1@jhu.edu}
\date{}
\subjclass[2020]{14E30, 14D07}
\keywords{Minimal model program, Kawamata--Matsuki conjecture, boundedness}

\begin{document}
	\begin{abstract}
		We generalize the Kawamata–Matsuki conjecture to klt log pairs and prove it in dimension two. More precisely, we show that a log surface admits only finitely many weakly log canonical projective models with klt singularities up to log isomorphism, by reducing the problem to boundedness of polarizations.
	\end{abstract}
	\maketitle
	\tableofcontents
	
	\section{Introduction}
	Let $X$ be a projective variety over the field of complex numbers $\Ko$. We study a classical problem about the number of normal $\mathbb{Q}$-factorial projective varieties $X_\al$ such that each $X_\al$ is birational to $X$, and a canonical divisor $K_{X_\alpha}$ is nef, that is, $K_{X_\alpha}\cdot C \ge 0$ for all curves $C\subset X_\alpha$. In general, this number is not expected to be finite unless we consider only mild singularities of $X_\alpha$. For instance, one may restrict to terminal singularities (see \cref{def: sing}). In this case the varieties $X_\al$ are called \textit{minimal models} of $X$. More precisely, the following conjecture was proposed by Kawamata and Matsuki \cite[Conjecture~12.3.6]{Mat02}:
	
	\begin{conjecture}
		\label{intro-conj: classic}
		The number of projective minimal models in a fixed birational class is finite up to isomorphism.
	\end{conjecture}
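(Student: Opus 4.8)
The plan is to stratify the problem by the Kodaira dimension $\kappa(X)$ and, in each stratum, to reduce finiteness up to isomorphism to a boundedness statement for the polarizations of the minimal models, exactly as the abstract proposes.

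When $X$ is of general type ($\kappa(X)=\dim X$), I would argue as follows. Finite generation of the canonical ring produces the canonical model $X_{\mathrm{can}}=\operatorname{Proj}\bigoplus_{m\ge 0}H^0(X,mK_X)$, and every minimal model $X_\al$ of $X$ dominates $X_{\mathrm{can}}$ and is isomorphic to $X$ in codimension one (a minimal model contracts no further divisor, by the negativity lemma). Thus each $X_\al$ is a small $\mathbb{Q}$-factorial modification of the single variety $X_{\mathrm{can}}$, and it remains to see that a fixed variety has only finitely many such modifications. This is a standard consequence of the minimal model program with scaling over $X_{\mathrm{can}}$, which exhibits the relevant part of $\overline{\operatorname{Mov}}(X)$ as a rational polyhedral cone cut into finitely many Mori chambers, one for each marked minimal model. (For surfaces this case is empty: a terminal surface is smooth, so the minimal model is unique.)

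The substance of the conjecture is the case $\kappa(X)<\dim X$, where infinitely many minimal models may occur; the infinitude, however, comes entirely from the action of the possibly infinite group $\operatorname{Bir}(X)$ on the chamber decomposition of $\overline{\operatorname{Mov}}(X)\cap\overline{\operatorname{Eff}}(X)$, two chambers producing isomorphic minimal models precisely when they lie in the same $\operatorname{Bir}(X)$-orbit. Hence finiteness up to isomorphism is equivalent to finiteness of the orbit space, which is the Kawamata--Morrison movable cone conjecture: the existence of a rational polyhedral fundamental domain for this action. To make the statement unconditional one would bound the polarization directly --- produce on each $X_\al$ an ample class (or, after one more MMP step, an anti-canonical class) whose numerical invariants lie in a fixed finite set, invoke boundedness of the resulting polarized varieties to confine the $X_\al$ to finitely many bounded families, and then use their rigidity as minimal models to conclude that there are only finitely many isomorphism classes.

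The step I expect to be the main obstacle is precisely this last one in the non-general-type regime, namely producing the bounded polarization; in arbitrary dimension it is open and is essentially the cone conjecture. In dimension two, however --- the case actually carried out in this paper --- the effective and movable cones of a surface are completely understood and the minimal model is unique, so the needed boundedness can be obtained directly; the real work then lies in the \emph{log} refinement, where one must carry the boundary divisor along and replace minimal models by the weakly log canonical models with klt singularities.
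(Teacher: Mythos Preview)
The statement you address is presented in the paper as an open \emph{conjecture} (attributed to Kawamata and Matsuki); the paper gives no proof of it and does not claim to. What the paper actually establishes is the \emph{log} refinement for \emph{surfaces} only, and for the non-log statement in dimension two there is nothing to prove: terminal surface singularities are smooth points, so the projective minimal model of a surface is unique --- a fact you yourself note. There is thus no ``paper's own proof'' of this statement to compare your proposal against.

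As a proof, your proposal is incomplete, and you are candid about why: in the non-general-type regime you reduce the question to the Kawamata--Morrison movable cone conjecture, which remains open in dimension $\ge 3$. So your argument is conditional, not a proof. The general-type paragraph is closer to complete --- finiteness of minimal models there is indeed known, via BCHM finiteness of models over the canonical model or via Kawamata's flop-connectedness together with termination --- but the phrasing ``isomorphic to $X$ in codimension one'' should be ``isomorphic to \emph{one another} in codimension one,'' since the original $X$ need not itself be minimal. What you have written is an accurate outline of the landscape surrounding the conjecture rather than a proof of it; since the paper itself offers no proof of this conjecture, there is no discrepancy to report beyond the fact that neither you nor the paper settles the statement.
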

	
	Let us remember that two log pairs $(X, B)$ and $(X^\prime, B^\prime)$ are said to be \textit{crepant birationally equivalent} if $X$ and $X^\prime$ are birational, and the log pullbacks of $K_X + B$ and $K_{X^\prime} + B^\prime$ coincide on any common log resolution for $(X, B)$ and $(X^\prime, B^\prime)$ (cf. \cref{def: 0-class}). Next, it is natural to extend \cref{intro-conj: classic} to a class of crepant birationally equivalent log pairs, namely a \textit{0-class}. In this setting, one needs to consider \textit{weakly log canonical models (wlc models)}. A proper log pair $(X, B)$ is a wlc model if $K_X + B$ is nef, $B$ is effective, and $(X, B)$ is lc (cf. \cref{def: wlc}).
	\begin{conjecture}
		\label{conj: finiteness}
		The number of projective wlc klt models in a fixed 0-class is finite up to log isomorphism.
	\end{conjecture}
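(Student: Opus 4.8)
\medskip

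The plan, for the case of log surfaces, is to realize the reduction announced in the abstract: replace the models of a 0-class by their common combinatorial data, settle the ``big'' case by hand, and in the remaining case deduce finiteness from boundedness of the models. First I would fix a representative $(X,B)$ of the 0-class $\Cc$ and a log resolution, so that $\Cc$ is recorded by the discrepancy function $E\mapsto a(E;\Cc)$, which is the same for every member. The first observation is that for any weakly log canonical klt model $(X_\al,B_\al)\in\Cc$ one has $\supp B_\al\subseteq\mathcal N:=\{E\colon a(E;\Cc)<0\}$, and $\mathcal N$ is \emph{finite} because we are on a surface: over a klt surface pair there are only finitely many exceptional prime divisors of negative discrepancy (discrepancies increase along a resolution and each chain terminates), and $B$ itself has finitely many components. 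Thus the shape of $B_\al$ is controlled from the start; what is not controlled a priori is the surface $X_\al$ when $K_{X_\al}+B_\al$ fails to be big.

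Next I would dispose of the big case. The number $(K_{X_\al}+B_\al)^2$ is a numerical invariant of $\Cc$, since crepant pullbacks preserve self-intersection; if it is positive, then $K_{X_\al}+B_\al$ is nef and big, hence semiample by the basepoint-free theorem, and the induced birational morphism $X_\al\to X^c$ identifies $X^c$ with $\operatorname{Proj}$ of the fixed log canonical ring of $\Cc$, a single klt surface pair. Then $X_\al$ arises from $X^c$ by extracting some subset of the finitely many divisors $E$ over $X^c$ with $a(E;\Cc)\le 0$; since each subset determines at most one $\mathbb Q$-factorial model, there are only finitely many $X_\al$, with no boundedness input needed.

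The substance is the non-big case $(K_{X_\al}+B_\al)^2=0$, where the numerical dimension $\nu$ of $K_{X_\al}+B_\al$ is $0$ or $1$ and there is no ample model to anchor the argument. Here I would instead show that the $X_\al$ form a bounded family --- the ``boundedness of the polarization'' of the abstract --- and recover finiteness from it. For $\nu=0$ one has $-K_{X_\al}\equiv B_\al\ge 0$, so the pairs are klt log Calabi--Yau surfaces in a fixed crepant class, and boundedness follows from boundedness of klt Calabi--Yau surface pairs (with non-rational $X_\al$ having moreover a unique minimal model in $\Cc$). For $\nu=1$ there is a fibration $f_\al\colon X_\al\to Z_\al$ onto a curve with $K_{X_\al}+B_\al\equiv f_\al^*(\text{ample})$; the base curves have a fixed genus, a general fibre $F$ satisfies $(K_{X_\al}+B_\al)\cdot F=0$, and if $p_a(F)\ge 1$ the relatively minimal model over $Z_\al$ is essentially unique, while if $p_a(F)=0$ then $K_{X_\al}\cdot F=-2$ forces $B_\al\cdot F=2$, so the klt condition together with $\supp B_\al\subseteq\mathcal N$ and the fixed discrepancies bound the coefficients and the singularities of $B_\al$ along every fibre --- hence bound how ``twisted'' the ruled surface $X_\al$ can be, i.e. the elementary transformations relating the various models. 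Once boundedness is available, finiteness up to log isomorphism follows because two crepant-birational wlc klt models inside a bounded family are linked by a bounded number of flop-type surface modifications, each with finitely many outcomes.

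The main obstacle will be precisely the rationally ruled $\nu=1$ case: a 0-class of numerical dimension one could a priori be realized on infinitely many pairwise non-isomorphic $\Pp^1$-bundles over a fixed base, and only a delicate quantitative (ACC/DCC-type) control on the coefficients and local log discrepancies of $B_\al$, forced by the nef, klt and fixed-crepant conditions, prevents this; converting that control into an actual boundedness statement, and then passing cleanly from boundedness to finiteness rather than mere boundedness, is the technical core of the argument. (In dimension $\ge 3$ the same scheme meets the usual open problems --- existence and finiteness of relative log canonical models, and boundedness in intermediate Kodaira dimension --- which is why only the surface case can be made unconditional here.)
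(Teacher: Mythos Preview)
Your case division by numerical dimension mirrors the paper's division by $\kappa(K_X+B)$, and your treatment of the big case is essentially the paper's Case~1. But there are two genuine gaps.

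First, the step ``boundedness $\Rightarrow$ finiteness'' is the technical heart of the paper, and your one-line justification does not work. You write that models inside a bounded family are ``linked by a bounded number of flop-type surface modifications, each with finitely many outcomes''; but surfaces have no flops, and boundedness only tells you that the models sit as fibers in finitely many projective families over positive-dimensional bases --- it does not by itself bound the number of isomorphism classes of fibers. The paper's Theorem~4.4 extracts finiteness from boundedness by (i) using the uniqueness of the terminal model (Proposition~4.3) to see that the terminalizations of all fibers are log isomorphic to a fixed $(X,B)$, (ii) proving an isotriviality theorem (Theorem~3.2, the whole of Section~3, via Ambro's lc-trivial fibration machinery in the $\mathbb Q$-boundary case and a Zariski-decomposition argument in the $\mathbb R$-boundary case) so that after an \'etale base change the terminalized family is $(X,B)\times S$, and then (iii) running a relative MMP to contract the same exceptional locus over every point of $S$ and conclude that all fibers of the original family are log isomorphic. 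None of this is visible in your proposal.

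Second, in the $\nu=0$ case you invoke ``boundedness of klt Calabi--Yau surface pairs'' uniformly, but Alexeev's theorem explicitly excludes the case $B_\alpha=0$, $K_{X_\alpha}\equiv 0$ with Du Val singularities --- precisely the Du Val K3 case. The paper therefore splits $\nu=0$ into $B\neq 0$ (Alexeev plus Theorem~4.4) and $B=0$ (handled separately via Kawamata's cone theorem for Calabi--Yau fiber spaces, ultimately Sterk/Torelli). Your parenthetical ``non-rational $X_\alpha$ having a unique minimal model'' does not rescue this: the minimal model is unique, but its Du Val contractions are the very models you must count. Incidentally, for $\nu=1$ the paper avoids boundedness altogether: since every $X_\alpha$ is a contraction of the unique terminal model $X$, the exceptional locus lies among the finitely many contractible vertical curves of the fibration $X\to C$, and finiteness is immediate --- much simpler than your proposed control on elementary transformations.
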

	Note that rational projective surfaces including infinitely many $(-1)$-curves and smooth projective K3 surfaces including infinitely many $(-2)$-curves provide non-trivial examples of \cref{conj: finiteness} in dimension two (cf. \cref{ex: k3}, \cref{ex: rational}).
	\begin{theorem}
		\label{th: intro}
		\cref{conj: finiteness} holds for log surfaces.
	\end{theorem}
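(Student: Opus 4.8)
\section{Proof proposal}

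The plan is to carry out exactly the reduction announced in the abstract: to deduce \cref{th: intro} from boundedness of the wlc klt models of the $0$-class, viewed as polarized pairs with polarization $K_{X_\alpha}+B_\alpha$. If the $0$-class contains no klt pair there is nothing to prove, so assume it contains a wlc klt model $(X_0,B_0)$. By abundance for log surfaces $K_{X_0}+B_0$ is semiample, so the $0$-class has a log canonical model $(X^{\mathrm c},B^{\mathrm c})$, which is an invariant of the class; when $\kappa:=\kappa(K_{X_0}+B_0)<2$ this is replaced by the Iitaka fibration over a fixed point (if $\kappa=0$) or a fixed curve $C$ (if $\kappa=1$). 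The volume $v=\operatorname{vol}(K_{X_0}+B_0)$ and the lower-dimensional target are likewise invariants of the class. Moreover, every wlc klt model $(X_\alpha,B_\alpha)$ of the class admits a morphism $\phi_\alpha\colon X_\alpha\to X^{\mathrm c}$ contracting exactly the curves $C$ with $(K_{X_\alpha}+B_\alpha)\cdot C=0$, and the coefficients of every $B_\alpha$ lie in the finite set $\operatorname{coeff}(B^{\mathrm c})\cup\{\,1-a(E;X^{\mathrm c},B^{\mathrm c}):E\text{ exceptional over }X^{\mathrm c},\ a(E;X^{\mathrm c},B^{\mathrm c})\le 1\,\}$; finiteness here is the elementary fact that a klt \emph{surface} pair carries only finitely many exceptional divisorial valuations of log discrepancy $\le 1$ (their centres lie among the finitely many singular points of $X^{\mathrm c}$ and the pairwise intersections of components of $B^{\mathrm c}$ of coefficient-sum $\ge1$, and over each such point a Fibonacci-type estimate bounds their number).

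The crux is then to show that the polarized pairs $(X_\alpha,K_{X_\alpha}+B_\alpha)$ form a bounded family, and I would argue by $\kappa$. If $\kappa=2$, then $K_{X^{\mathrm c}}+B^{\mathrm c}$ is ample and $X^{\mathrm c}$ is a \emph{fixed} pair; each $X_\alpha$ is obtained from $X^{\mathrm c}$ by extracting a subset of the finitely many exceptional divisors of log discrepancy $\le 1$, so there are only finitely many $X_\alpha$ and boundedness is immediate (Alexeev's boundedness theorem for log surfaces would also suffice, but is not needed). If $\kappa=0$, then $K_{X_\alpha}+B_\alpha\equiv 0$, so the $(X_\alpha,B_\alpha)$ are klt log Calabi--Yau surfaces with coefficients in a fixed finite set, and these form a bounded family by the known boundedness of $\epsilon$-klt (equivalently, DCC-coefficient) log Calabi--Yau surfaces. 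If $\kappa=1$, the semiample system of $K_{X_\alpha}+B_\alpha$ gives a fibration $f_\alpha\colon X_\alpha\to C$ with $K_{X_\alpha}+B_\alpha\sim_{\mathbb Q}f_\alpha^{*}A$ for an ample $\mathbb Q$-divisor $A$ on the fixed curve $C$; the general fibre is a log Calabi--Yau curve with coefficients in a fixed finite set, hence of one of finitely many bounded types, and the canonical bundle formula bounds the discriminant and moduli data on $C$ uniformly over the class, so the relatively minimal fibred surfaces over $C$ with this fibre behaviour form a bounded family. In every case, composing with the finitely many ``extractions of $a(E)\le1$ divisors'' shows that $\{(X_\alpha,B_\alpha)\}$ is bounded.

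Finally one passes from boundedness to finiteness by a Noetherian argument: the bounded family is parametrized by a scheme $T$ of finite type, and fixing the numerical invariants of the $0$-class cuts out finitely many irreducible components of $T$. On each component the members that actually lie in the $0$-class are mutually crepant-equivalent with equal numerical data, and such log surfaces varying in a family are locally isomorphic (their crepant structure — the finite set of contracted curves and extracted $a(E)\le1$ divisors — is locally constant, so the connecting crepant birational maps are ``the same'' in a neighbourhood); hence the $0$-class meets each component in a single log-isomorphism class. Concretely, in the general-type case a member is determined up to log isomorphism by its crepant structure over the fixed pair $(X^{\mathrm c},B^{\mathrm c})$, i.e.\ by a subset of the finite set $\{E:a(E;X^{\mathrm c},B^{\mathrm c})\le1\}$. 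Summing over the finitely many components yields \cref{conj: finiteness} for log surfaces.

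I expect the main obstacle to be the boundedness step when $\kappa\le 1$: there $K_{X_\alpha}+B_\alpha$ is nef but not big, so it cannot by itself bound $X_\alpha$, and one must instead extract boundedness from the (log-)Calabi--Yau / fibred structure — invoking boundedness of klt log Calabi--Yau surfaces with bounded coefficients, the canonical bundle formula to control the base data uniformly over the $0$-class, and the structure theory of relatively minimal fibred surfaces — before reassembling the finitely many crepant modifications. By contrast, the case $\kappa=2$ is essentially formal once one knows that a klt surface pair has only finitely many exceptional divisorial valuations of log discrepancy $\le 1$ and that the log canonical model is fixed within the $0$-class.
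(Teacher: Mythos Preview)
Your overall architecture---case split by $\kappa(K_X+B)$, with boundedness feeding the Calabi--Yau case---matches the paper's, and your $\kappa=2$ argument is essentially correct (the paper phrases it dually: $\E(f_\alpha)\subseteq\E(g)$ where $g\colon X\to X^{\mathrm c}$ is the map from the unique terminal model). For $\kappa=1$ you overcomplicate things: the paper simply observes that the fibration $X\to C$ from the terminal model factors every $f_\alpha\colon X\to X_\alpha$, so $\E(f_\alpha)$ lies among the finitely many contractible vertical curves of $X$; no canonical bundle formula or boundedness of fibred surfaces is needed, and your ``finitely many extractions of $a(E)\le 1$ divisors'' picture, which needs a \emph{surface} target $X^{\mathrm c}$, does not make sense here.

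The genuine gaps are both in $\kappa=0$. First, your boundedness input fails when the terminal boundary $B=0$: Alexeev's $\epsilon$-klt boundedness theorem explicitly \emph{excludes} the case $B_\alpha=0$, $K_{X_\alpha}\equiv 0$ with Du Val singularities, and Du Val K3 surfaces in a fixed birational class are not known to have bounded polarization without Sterk's cone theorem (hence Torelli). The paper does not go through boundedness here at all; it quotes Kawamata's result that a smooth Calabi--Yau surface admits only finitely many birational contractions up to automorphism. Second, and more seriously, your Noetherian step ``such log surfaces varying in a family are locally isomorphic (their crepant structure \dots\ is locally constant)'' is precisely the content of \cref{main_intro}, and it is not a formality. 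What must be shown is that a family of trm $0$-pairs whose closed fibres are all log isomorphic to a fixed $(X,B)$ becomes \emph{trivial} after an \'etale base change; only then can one run a relative MMP to contract a \emph{constant} exceptional locus and identify all the klt fibres. The paper devotes all of \cref{sec: isotriviality} to this isotriviality: for $\mathbb{Q}$-boundaries it passes to the index-one cover, resolves, and uses Ambro's period-map injectivity to collapse the base to a point; for $\mathbb{R}$-boundaries it first isotrivializes the underlying family and then uses a global Zariski decomposition of $\B$ to trivialize the boundary. Your ``locally constant crepant structure'' is the \emph{conclusion} of that argument, not an available premise.
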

	Let $(X, B)$ be a projective log surface. The most delicate case of \cref{conj: finiteness} for log surfaces is when $K_X + B \equiv 0$, namely, \textit{0-pairs} (cf. \cref{def: wlc}). By definition, a $0$-pair is a wlc model. Our proof reduces the finiteness problem for 0-pairs to the boundedness of polarizations on their models. The following theorem is the main result of this work.

	\begin{theorem}
		\label{main_intro}
		Let $(X, B)$ be a projective $0$-pair of dimension 2. Consider the class $\Cc$ of all projective wlc klt models $(X_\alpha,B_\alpha)$ that are crepant birationally equivalent to $(X,B)$. Let $\Dd$ be the class of all varieties $X_\alpha$ corresponding to the pairs $(X_\alpha,B_\alpha)$ in~$\Cc$.
		If the class $\Dd$ has bounded polarization, then the set of (log) isomorphism classes in $\Cc$ is finite.
	\end{theorem}
	
	\begin{remark}
		The restriction on singularities in \cref{conj: finiteness} cannot be weakened to log canonical singularities. Indeed, consider the log pair $(X, B) = (\Pp^2_\Ko, C)$, where $C\subset \Pp^2_\Ko$ is a smooth cubic curve. Let $q_1: X_1\to X$ be the blow-up of a point $p_1\in B$, and define $B_1$ by the equation $K_{X_1} + B_1 = q_1^*(K_X + B)$. By construction, $(X_1, B_1)$ is crepant birationally equivalent to $(X, B)$. Note that $B_1 = (q_1^{-1})_*B + a_1E_1$ for some $a_1\in \mathbb{Q}$, where $E_1$ is the exceptional divisor for $q_1$. It follows from the adjunction and projection formulas that $a_1 = 0$. Similarly, the blow-up of any point $p_2 \in B_1 \smallsetminus E_1$ adds to the boundary a divisor $a_2E_2$ with $a_2 = 0$. Repeating this process, we obtain infinitely many pairwise non-isomorphic projective wlc models $(X_i, B_i)$ in the 0-class of~$(X, B)$.
	\end{remark}

	\addtocontents{toc}{\protect\setcounter{tocdepth}{1}}
	\subsection*{Acknowledgments}
	The author is grateful to Professor Shokurov for the ideas underlying this work, guidance, and support. The author thanks the anonymous referee for valuable comments that improved the exposition.
	
	\section{Preliminaries}
	Throughout, we work over the field of complex numbers $\mathbb{C}$ and follow the standard terminology \cite{IS05} unless stated otherwise. By a variety we mean an integral separated scheme of finite type over $\mathbb{C}$. On a normal variety, a prime divisor is understood in the Weil sense, i.e. an integral closed subscheme of codimension~$1$. In every part of the paper, for an $\mathbb{R}$-divisor $D$ we write $D=\sum_i d_i D_i$, where $D_i$ are distinct prime divisors and $d_i\in\mathbb{R}$.
	
	A \textit{pair} $(X,B)$ consists of a normal variety $X$ and an $\mathbb{R}$-divisor $B$ on $X$. We say that $(X,B)$ is a \textit{log pair} if $K_X+B$ is an $\mathbb{R}$-Cartier $\R$-divisor. Suppose $B=\sum_i b_i B_i$, and $\Gamma\subseteq\mathbb{R}$ is a set. If $b_i\in\Gamma$ for all~$i$ then we write $B\in\Gamma$. Denote by $[0,1]$ the unit segment of real numbers.  The $\mathbb{R}$-divisor $B$ is called a \textit{boundary} if $B\in [0,1]$. A pair $(X,B)$ is called \textit{projective} (resp. \textit{of dimension} $d$) if $X$ is projective (resp. of dimension $d$). The pairs $(X,B)$ and $(X',B')$ are said to be \textit{log isomorphic} if there exists an isomorphism $\varphi:X\to X'$ such that $\varphi_*(B)=B'$; we then write $(X,B)\approx (X',B')$. We use the notation $\cong$ instead of $\approx$ if the isomorphism is natural.
	
	\begin{definition}
		\label{def: sing}
		Let $(X,B)$ be a log pair with $B=\sum b_i B_i$. We say that $(X,B)$ has the following singularities when the corresponding inequalities hold:
		\[
		\begin{gathered}
			\begin{cases}
				\text{terminal (trm)} \\
				\text{canonical} \\
				\text{Kawamata log terminal (klt)}\\
				\epsilon\text{-Kawamata log terminal ($\epsilon$-klt)}\\
				\text{log canonical (lc)}
			\end{cases}
			\Longleftrightarrow
			\text{dis}(X,B)
			\begin{cases}
				>0;\\
				\ge 0;\\
				>-1 \ \text{and}\  b_i<1 \ \forall i;\\
				>-1+\epsilon \ \text{and}\  b_i<1-\epsilon \ \forall i;\\
				\ge -1 \ \text{and}\  b_i\le 1 \ \forall i.
			\end{cases}
		\end{gathered}
		\]
		Here, the \textit{discrepancy} of $(X, B)$ is given by $$\text{dis}(X, B) = \underset{E}{\inf}\{a(E; X, B): E \text{ is an exceptional prime divisor over } X\},$$ that is, $E$ runs through all the prime exceptional divisors of all birational morphisms $q: X\to Y$, and the number $a(E; X, B)\in \mathbb{R}$ is defined by the formula $K_Y = q^*(K_X + B) + \sum_E a(E; X, B)\, E$.
	\end{definition}
	
	\begin{definition}
		\label{def: wlc}
		A log pair $(X,B)$ is called a \textit{weakly log canonical model (wlc model)} if the following hold:
		\begin{itemize}
			\item $X$ is a proper variety, and $B$ is a boundary.
			\item $(X,B)$ has log canonical singularities.
			\item $K_X+B$ is nef.
		\end{itemize}
		If $(X,B)$ has klt (resp. terminal) singularities, then $(X,B)$ is called a \textit{wlc klt model} (resp. a \textit{wlc trm\footnote{The abbreviation "trm" is introduced in \cite{IS05}, and we use it systematically in this paper.} model}). If, in addition, $K_X+B\equiv 0$, then $(X,B)$ is called a \textit{0-pair}\footnote{In the literature, 0-pairs are also called Calabi–-Yau pairs.}.
	\end{definition}
	
	\begin{definition}
		\label{def: 0-class}
		We say that a log pair $(X_\alpha,B_\alpha)$ is \textit{crepant birationally equivalent} to $(X,B)$ if the varieties  $X_\alpha$ and $X$ are birationally equivalent, and for each common log resolution $Y$ of these pairs there is an $\R$-divisor $D$ on $Y$ such that the following equalities hold:
		\[
		\begin{gathered}
			K_Y+D=q^*(K_X+B)=q_\alpha^*(K_{X_\alpha}+B_\alpha),\\
			B=q_*D,\quad q_\alpha=(q_\alpha)_*D,
		\end{gathered}
		\]
		where $q:Y\to X$ and $q_\alpha:Y\to X_\alpha$ are the corresponding log resolutions. The class of all log pairs crepant birationally equivalent to $(X,B)$ is called the \textit{0-class}\footnote{Assuming the abundance conjecture, every wlc model is a "relative 0-pair" over its log canonical model, which is unique. So, a 0-class is a version of a birational class for relative 0-pairs.} of the log pair $(X,B)$.
	\end{definition}
	
	\begin{remark}
		If $(X,B)$ is a wlc klt model (resp. 0-pair), then each log pair $(X_\alpha,B_\alpha)$ crepant birationally equivalent to $(X, B)$ with effective $\mathbb{R}$-divisor $B_\alpha$ is also a wlc klt model (resp. 0-pair).
	\end{remark}
	
	\begin{example}
		\label{ex: k3}
		Let $X$ be a smooth projective K3 surface $(K_X \sim 0)$ including infinitely many distinct $(-2)$-curves $C_i\subset X$, that is, smooth rational curves with $C_i^2=-2$. For any rational number $0< \epsilon < 1$, the log pair $(X, \epsilon C_i)$ is klt, and $(K_X + \epsilon C_i)\cdot C_i = -2\epsilon < 0$. Hence, there exists a projective birational morphism $q_i:X\to X_i$ contracting $C_i$ such that $(X_i,0)$ is klt due to \cite[Theorem 3.7]{KM98}. Moreover, the adjunction and projection formulas imply that $K_{X_i} \sim 0$. Hence, each log pair $(X_i, 0)$ is a projective klt $0$-pair in the 0-class of $(X,0)$.
	\end{example}
	
	\begin{example}
		\label{ex: rational}
		Let $C_1 \subset \Pp^2_\Ko, C_2\subset\Pp^2_\Ko$ be smooth cubic curves intersecting in distinct points $p_1, p_2, \dots, p_9$. In addition, we assume that these points are non-torsion on both curves $C_1, C_2$. Set $q: \textnormal{Bl}_{p_1, \dots p_9}\Pp^2_\Ko \to \Pp^2_\Ko$, and $\widetilde{C_i} = q_*^{-1}C_i$. Then the pair $(X, B) = (\textnormal{Bl}_{p_1, \dots p_9}\Pp^2_\Ko, \frac{1}{2}\widetilde{C_1} + \frac{1}{2}\widetilde{C_2})$ is a klt $0$-pair that contains infinitely many $(-1)$-curves $E_i\subset X$. By the Cone Theorem \cite[Theorem 3.7]{KM98}, there exists a projective birational morphism $q_i: X\to X_i$ contracting the curve $E_i$. Then each log pair $(X_i, (q_i)_*B)$ is a projective klt $0$-pair in the $0$-class of $(X, B)$.
	\end{example}
	
	Let $f:\X\to S$ be a morphism of schemes and $s\in S$ a point. We denote the scheme-theoretic fiber by $\X_s=\X\times_S s$. If $\Li$ is a line bundle (an invertible sheaf) on $\X$, then its restriction to the fiber $\X_s$ is denoted by $\Li_s=\Li|_{\X_s}$. When no ambiguity arises, we denote a morphism $f: \X \to S$ simply by $\X/S$.
	
	\begin{definition}
		Suppose $f: \X \to S$ is a proper flat morphism between varieties, and the base $S$ is non-singular. We say that $\X/S$ is a \textit{family of varieties} if every fiber $\X_s$ is a variety. If, in addition, the morphism $f$ is projective, then $\X/S$ is called a \textit{projective family of varieties}.
	\end{definition}
	
	Consider a family of varieties $\X/S$ and an $\mathbb{R}$-divisor $\B=\sum b_i\B_i$ on $\X$. In general, the restriction of $\B$ to a fiber $\X_s$ is not defined, and even when it is, the definition is not straightforward. For our purposes it is convenient to work with \textit{elementary families} on which $\B_s$ is defined.
	
	\begin{definition}
		\label{def: elementary_family}
		Let $(\X,\B)$ be a pair and $f:\X\to S$ be a family of varieties. Suppose $\B=\sum_{i=1}^n b_i\B_i$ is a decomposition of the $\mathbb{R}$-divisor into distinct prime divisors. We say that $(\X/S,\B)$ is an \textit{elementary family} if for every point $s\in S$ the following hold:
		\begin{enumerate}
			\item The fiber $\X_s$ is a normal variety.
			\item The restriction $f|_{\B_i}:\B_i\to S$ is flat for all $i$.
			\item The closed subscheme $(\B_i)_s=\B_i\times_S s$ is a prime divisor on $\X_s$ for all $i$.
			\item The prime divisors $(\B_i)_s$ and $(\B_j)_s$ are distinct whenever $i\ne j$.
		\end{enumerate}
		In this situation, we put $\B_s=\sum_i b_i(\B_i)_s$. If, in addition, the morphism $f$ is projective, then $(\X/S,\B)$ is called a \textit{projective elementary family}.
	\end{definition}
	
	\begin{definition}
		\label{def: bounded_varieties}
		A class $\Dd$ of projective varieties $X_\alpha$ is called \textit{bounded} if there exist finitely many projective families $\X^{(j)}/S^{(j)}$ such that:
		\begin{enumerate}
			\item For every $X_\alpha$ in $\Dd$ there is an index $j$ and a closed point $s_\alpha\in S^{(j)}$ with $\X_{s_\alpha}^{(j)}\approx X_\alpha$.
			\item For each index $j$ and every closed point $s\in S^{(j)}$ there exists $X_\alpha$ in $\Dd$ with $\X_s^{(j)}\approx X_\alpha$.
		\end{enumerate}
	\end{definition}
	
	\begin{definition}
		\label{def: bounded_pairs}
		A class $\Cc$ of projective pairs $(X_\alpha,B_\alpha)$ is called \textit{bounded} if there exist finitely many projective elementary families $(\X^{(j)}/S^{(j)},\B^{(j)})$ such that:
		\begin{enumerate}
			\item For every $(X_\alpha,B_\alpha)$ in $\Cc$ there is an index $j$ and a closed point $s_\alpha\in S^{(j)}$ such that $(X_\alpha,B_\alpha)\approx~(\X^{(j)}_{s_\alpha},\B^{(j)}_{s_\alpha}).$
			\item For each index $j$ and every closed point $s\in S^{(j)}$ there exists a pair $(X_\alpha,B_\alpha)$ in $\Cc$ such that $(\X^{(j)}_s,\B^{(j)}_s)\approx~(X_\alpha,B_\alpha).$
		\end{enumerate}
	\end{definition}

	\begin{definition}
		A class $\Cc$ of projective pairs $(X_\alpha,B_\alpha)$ \textit{has bounded polarization} if there exists a positive integer $N\in \mathbb{Z}_{>0}$ and a finite subset $\Gamma\subset\mathbb{R}$ such that $B_\alpha\in\Gamma$, and each $X_\alpha$ admits a very ample Cartier divisor $H_\alpha$ satisfying the following inequalities:
		\[
		(B_\alpha)_{\mathrm{red}}\cdot H_\alpha^{\dim X_\alpha-1}\le N,\qquad
		H_\alpha^{\dim X_\alpha}\le N,
		\]
		where $(B_\alpha)_{\mathrm{red}}=\sum_{V\in\supp(B_\alpha)} V$, $\ \supp(B_\alpha)=\{\,B_{i\alpha}\mid b_{i\alpha}\ne 0\,\}$, and $B_\alpha=\sum_i b_{i\alpha}B_{i\alpha}$.
		
		Similarly, a class of projective (not necessarily normal) varieties $X_\alpha$ has bounded polarization if there exist a positive integer $N\in \mathbb{Z}_{>0}$ and very ample Cartier divisors $H_\alpha$ on $X_\alpha$  such that $H_\alpha^{\dim X_\alpha}\le N$.
	\end{definition}
	
	\begin{remark}
		\label{remark: components}
		As the intersection $(B_\alpha)_{\mathrm{red}}\cdot H_\alpha^{\dim X_\alpha-1}$ is bounded, the set $\{\#\supp(B_\alpha)\}\subseteq \mathbb{Z}_{\ge0}$ is also bounded. We write $\# A$ for the number of elements of a finite set $A$.
	\end{remark}
	For ease of reference, we recall the following general result proven by Grothendieck.
	\begin{proposition}\cite[Appendix~E]{GW20}
		\label{prop: open_properties}
		Let $\X \to S$ be a flat proper morphism between Noetherian schemes. Then the following sets are open in $S$:
		\begin{itemize}
			\item
			$\{s\in S: \X_s \ \text{is geometrically integral} \  \}$;
			\item 
			$\{s\in S: \X_s \ \text{is geometrically normal} \ \}$.
		\end{itemize}
	\end{proposition}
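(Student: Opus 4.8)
The plan is to use the standard topological criterion that, in a Noetherian scheme $S$, a subset is open if and only if it is constructible and stable under generization (see \cite{GW20}). Thus for each of the two properties $\mathsf P$ in question --- ``$\X_s$ is geometrically integral'' and ``$\X_s$ is geometrically normal'' --- I would reduce the problem to two separate verifications for the locus $U_{\mathsf P}=\{s\in S:\X_s\text{ has }\mathsf P\}$: that $U_{\mathsf P}$ is constructible, and that it is stable under generization. For the second verification it suffices, after replacing $S$ by the spectrum of a local ring and passing to a generization, to treat the case where $S=\operatorname{Spec}A$ with $A$ a Noetherian local domain (one may even dominate $A$ by a discrete valuation ring), with $s$ the closed point and $\eta$ the generic point, and to prove that if $\X_s$ has $\mathsf P$ then so does $\X_\eta$. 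In other words, the crux is that the relevant good property must \emph{propagate from the special fibre to the generic fibre} in a flat proper family.

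Constructibility of $U_{\mathsf P}$ requires only that $f$ be of finite type over the Noetherian base, and is an instance of the general theory of constructible properties of fibres: the loci where the geometric fibres are reduced, irreducible, connected, or satisfy Serre's conditions $(R_k)$ and $(S_k)$ are all constructible. Since geometric normality is equivalent, by Serre's criterion, to the conjunction of $(R_1)$ and $(S_2)$ on the geometric fibres, and geometric integrality is the conjunction of geometric reducedness and geometric irreducibility, $U_{\mathsf P}$ is in each case a finite Boolean combination of constructible loci, hence constructible.

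For stability under generization I would separate the two ingredients. The conditions $(R_1)$, $(S_2)$, and geometric reducedness are controlled fibrewise through flatness alone: for a flat local homomorphism the regularity and depth of a local ring of $\X$ are governed by those of the base and of the fibre, so normality of the special fibre $\X_s$ forces $\X$ itself to satisfy $(R_1)+(S_2)$ along $\X_s$ in a neighbourhood, and this property then descends to $\X_\eta$. Hence the geometrically normal locus is open. Geometric reducedness is handled by the same circle of ideas, using that it is detected after base change to the perfect closure, which reduces it to a generic statement in characteristic $p$.

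The remaining and most delicate point --- and the only place where \emph{properness} is essential rather than mere finite type --- is the propagation of geometric irreducibility, equivalently, given reducedness, of geometric connectedness. Here I would form the Stein factorization $\X\xrightarrow{h} S'\xrightarrow{g} S$ with $S'=\underline{\operatorname{Spec}}_S f_*\mathcal O_\X$; properness guarantees that $f_*\mathcal O_\X$ is coherent, so $g$ is finite, while $h$ has geometrically connected fibres. Geometric connectedness of $\X_s$ then amounts to $g^{-1}(s)$ being a single geometric point, and when $\X_s$ is geometrically connected and reduced, cohomology and base change for $H^0$ shows that $f_*\mathcal O_\X$ has rank one near $s$, so $g$ is an isomorphism there and $\X_\eta$ remains geometrically connected; combined with geometric reducedness and the behaviour of irreducible components this yields geometric irreducibility of $\X_\eta$. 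I expect this connectedness and semicontinuity step to be the main obstacle, since it is exactly where the interplay of properness, flatness, and base change for $H^0$ must be used to control the number of geometric components of the fibres.
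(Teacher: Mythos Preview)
The paper does not give its own proof of this proposition: it is stated with a citation to \cite[Appendix~E]{GW20} and introduced as ``the following general result proven by Grothendieck,'' with no argument supplied. So there is nothing in the paper to compare your proposal against.

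That said, your outline is the standard EGA~IV strategy (constructibility plus stability under generization, with Stein factorization and cohomology-and-base-change handling geometric connectedness), and it is essentially correct. The only place I would tighten is the normality step: the sentence ``normality of the special fibre $\X_s$ forces $\X$ itself to satisfy $(R_1)+(S_2)$ along $\X_s$ in a neighbourhood, and this property then descends to $\X_\eta$'' is a bit glib, since what you actually need is that the \emph{geometric} fibre over $\eta$ satisfies $(R_1)+(S_2)$, not $\X$ itself; the clean way is to use the local criterion of flatness together with the behaviour of $(R_k)$ and $(S_k)$ under flat base change, or simply to quote the openness of the fibrewise $(R_k)$ and $(S_k)$ loci directly. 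But this is a matter of detail, not a genuine gap.
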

	For the reader’s convenience, we prove the following statement, which often appears in the literature without proof.
	\begin{lemma}
		\label{lemma: H-boundedness(varieties)}
		Let $\Dd$ be a class of projective varieties of fixed dimension $d\in \mathbb{Z}_{>0}$. If $\Dd$ has bounded polarization, then $\Dd$ is a subclass of a bounded class of projective varieties.
	\end{lemma}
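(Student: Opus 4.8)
The plan is to use the bounded polarization to realize every member of $\Dd$, simultaneously, as a subvariety of one fixed projective space, then as a fibre of a universal family over a Hilbert scheme, and finally to take the desired bounded class to consist of all fibres of the finitely many families obtained after cutting the bases of these Hilbert schemes into nonsingular pieces. Concretely: let $N$ be the integer furnished by bounded polarization, so each $X_\alpha\in\Dd$ carries a very ample Cartier divisor $H_\alpha$ with $H_\alpha^{d}\le N$. The complete linear system $|H_\alpha|$ embeds $X_\alpha$ as a nondegenerate integral subvariety of $\Pp^{M_\alpha}$, where $M_\alpha=h^{0}(X_\alpha,H_\alpha)-1$, of degree $H_\alpha^{d}\le N$. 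By the classical minimal-degree inequality --- the codimension of a nondegenerate integral subvariety of a projective space is at most its degree minus one --- we get $M_\alpha\le d-1+H_\alpha^{d}\le d-1+N=:M$, so composing with a linear inclusion $\Pp^{M_\alpha}\hookrightarrow\Pp^{M}$ (which leaves the Hilbert polynomial unchanged, since $\mathcal{O}_{\Pp^{M}}(1)$ restricts to $\mathcal{O}_{\Pp^{M_\alpha}}(1)$) we may regard every $X_\alpha$ as a closed integral subvariety of the single space $\Pp^{M}$, of dimension $d$ and degree at most $N$.

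I would then invoke the classical fact that a degree bound permits only finitely many Hilbert polynomials among integral subvarieties of $\Pp^{M}$ of dimension $d$; this goes back to Matsusaka and can be proved by induction on $d$, cutting by a general hyperplane (which preserves integrality and degree by Bertini over $\Ko$) and using Castelnuovo's bound on the arithmetic genus of an integral curve of bounded degree. Let $P_{1},\dots,P_{r}$ be the Hilbert polynomials that actually occur, and for each $k$ let $\mathcal{H}_{k}=\mathrm{Hilb}^{P_{k}}(\Pp^{M})$ carry its universal family $\mathcal{U}_{k}\subseteq\Pp^{M}\times\mathcal{H}_{k}$, which is flat and projective over $\mathcal{H}_{k}$; each $X_\alpha\subseteq\Pp^{M}$ is the fibre of $\mathcal{U}_{k}$ over a closed point of the appropriate $\mathcal{H}_{k}$. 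Since $\mathcal{H}_{k}$ is projective, hence Noetherian, I can replace it by $(\mathcal{H}_{k})_{\mathrm{red}}$ --- flatness and projectivity of $\mathcal{U}_{k}$ survive this base change --- and then, using that the smooth locus of a reduced $\Ko$-scheme is open, write $(\mathcal{H}_{k})_{\mathrm{red}}$ by Noetherian induction as a finite disjoint union of nonsingular irreducible locally closed subschemes. Restricting $\mathcal{U}_{k}$ to each of these, and then, via \cref{prop: open_properties}, to the open locus where the fibres are geometrically integral, and discarding the empty ones, I obtain finitely many projective families of varieties $\X^{(j)}/S^{(j)}$ (the total spaces are integral, being flat over an integral base with geometrically integral fibres).

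Finally, let $\widehat{\Dd}$ be the class of all fibres over closed points of the families $\X^{(j)}/S^{(j)}$. These finitely many families witness that $\widehat{\Dd}$ is bounded in the sense of \cref{def: bounded_varieties}: $\widehat{\Dd}$ consists precisely of the closed fibres of the $\X^{(j)}/S^{(j)}$, so conditions (1) and (2) are both immediate. And $\Dd\subseteq\widehat{\Dd}$, since each $X_\alpha$, through its embedding in $\Pp^{M}$, is the fibre of some $\mathcal{U}_{k}$ over a closed point, which lies in $(\mathcal{H}_{k})_{\mathrm{red}}$, hence in one of the nonsingular strata, and in fact in its ``geometrically integral'' open locus because $X_\alpha$ is a variety; thus $X_\alpha\approx\X^{(j)}_{s_\alpha}$ for a suitable index $j$ and closed point $s_\alpha$. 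The only step carrying real content is the finiteness of the set of Hilbert polynomials under a degree bound --- equivalently, a uniform regularity (Serre-vanishing) bound for integral subvarieties of bounded degree --- where the hyperplane-section induction sketched above requires some care; the embedding step is just the minimal-degree inequality and bookkeeping, and the passage to families is formal base change together with the openness statements of \cref{prop: open_properties}.
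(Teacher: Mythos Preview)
Your proof is correct and follows the same overall strategy as the paper---embed all $X_\alpha$ into a single projective space, parametrize by a moduli space, then stratify the base---but differs in two technical choices. First, you bound the embedding dimension via the minimal-degree inequality for nondegenerate subvarieties, whereas the paper quotes Matsusaka's inequality $h^{0}(X_\alpha,H_\alpha)\le H_\alpha^{d}+d$; the resulting bounds coincide up to a constant. Second, and more substantively, the paper parametrizes by the Chow variety $\mathbf{Chow}_{d,N}(\Pp^{N+d}_{\Ko})$ and its universal family, while you use the Hilbert schemes $\mathrm{Hilb}^{P_k}(\Pp^{M})$. Your route has the advantage that the universal family over the Hilbert scheme is flat by construction, so you avoid the paper's appeal to the generic-flatness stratification; the price is that you must first argue only finitely many Hilbert polynomials occur (your hyperplane-section/Castelnuovo sketch), a step the Chow approach sidesteps since the Chow variety is indexed by degree alone. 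Both arguments then finish identically: Noetherian stratification of the base into nonsingular pieces and restriction to the open locus of geometrically integral fibres.
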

	
	\begin{proof}
		We construct a projective morphism $f:\X\to S$ between quasi-projective schemes such that for every variety $X_\al$ in $\Dd$ there exists a closed point $s_\al\in S$ together with a closed embedding $\varphi_\alpha: X_\alpha \hookrightarrow \Pp^{N+d}_\mathbb{C}$ such that $\varphi_\alpha(X_\alpha)=\X_{s_\alpha}$. By the definition of a class with bounded polarization, there exists a positive integer $N$ and very ample Cartier divisors $H_\alpha$ on $X_\al$ such that $H_\al^{d}\le N$. From Matsusaka’s inequalities \cite[Chapter~VI, Exercise~2.15.8.5]{Kol96} it follows that
		\[
		\dim_\mathbb{C} H^0(X_\al,H_\al)\le H_\al^{d}+d\le N+d<+\infty.
		\]
		Hence for every $X_\al\in\Dd$ there is an embedding $X_\al\hookrightarrow \Pp^{\dim|H_\al|}_\mathbb{C}\subseteq \Pp^{N+d}_\mathbb{C}$ such that $\deg(X_\al)=(H_\alpha)^d\le N$. According to \cite[Chapter~I, Theorem~3.21]{Kol96} for every positive integer $N^\prime\le N$ there is a universal family
		\[
		\mathbf{Univ}_{d,N^\prime}(\Pp^{N+d}_\mathbb{C}) \longrightarrow \mathbf{Chow}_{d,N^\prime}(\Pp^{N+d}_\mathbb{C})
		\]
		parametrizing $d$-dimensional effective cycles of degree $N^\prime$ in $\Pp^{N+d}_\mathbb{C}$. Define the following schemes: $$\mathbf{Univ}_{d,\le N}(\Pp^{N+d}_\mathbb{C}) = \bigsqcup_{1\le N^\prime \le N} \mathbf{Univ}_{d,N^\prime}(\Pp^{N+d}_\mathbb{C}), \ \mathbf{Chow}_{d,\le N}(\Pp^{N+d}_\mathbb{C}) = \bigsqcup_{1\le N^\prime\le N} \mathbf{Chow}_{d, N^\prime}(\Pp^{N+d}_\mathbb{C}).$$
		Let $S$ be the closure of the set $\{[X_\alpha]\}\subseteq \mathbf{Chow}_{d,\le N}(\Pp^{N+d}_\mathbb{C})$, endowed with the structure of a reduced closed subscheme. Then the projection
		\[
		f:\ \X=\mathbf{Univ}_{d,\le N}(\Pp^{N+d}_\Ko)\times_{\mathbf{Chow}_{d,\le N}(\Pp^{N+d}_\Ko)} S \longrightarrow S
		\]
		is a projective morphism.
		
		Since the base is of finite type over $\Ko$, we may assume that $S$ is irreducible. By the generic flatness theorem \cite[Theorem~5.12]{FGA05} for a morphism of finite type (over an integral Noetherian scheme), there are finitely many locally closed subschemes $S_i \subseteq S$ such that the restriction $\X\times_S S_i\to S_i$ of $f$ is flat, and $\bigcup_i S_i = S$. After the base change $S^\prime = \bigsqcup_i S_i\to S$ we obtain a flat projective morphism $f^\prime: \X^\prime = \X\times_S S^\prime \to S^\prime$. To ease the notation, we continue to write $f: \X\to S$ for the corresponding new family. By \cref{prop: open_properties}, the set $\{\,s\in S:\, \X_s\ \text{is geometrically integral}\,\}$ is open in $S$. Therefore, after shrinking the base, we may assume that $\X_s$ is a variety for every closed point $s\in S$. As the singular locus of $S$ is closed, after a further stratification and base change we may assume that $S$ is non-singular. Set $\X_j=\X\times_S S_j$, where $\{S_j\}$ is the finite set of irreducible components of the base. Then each $\X_j/S_j$ is a projective family of varieties, and every variety $X_\al$ in $\Dd$ is isomorphic to some fiber of one of these families.
	\end{proof}
	
	\begin{remark}
		In general, a subclass of a bounded class of projective varieties need not be bounded in the sense of \cref{def: bounded_varieties}. For example, consider the class of all elliptic curves with rational $j$-invariant.
	\end{remark}
	
	At the beginning of the main section (\cref{sec: main}), we prove a version of \cref{lemma: H-boundedness(varieties)} for pairs. Its proof is more technical than the proof of \cref{lemma: H-boundedness(varieties)} because it requires extra care regarding conditions (3) and (4) of \cref{def: elementary_family}.
	\begin{proposition}[{\cref{lemma: H-boundedness}}]
		Let $\Cc$ be a class of projective pairs of fixed dimension $d \in \mathbb{Z}_{>0}$. If $\Cc$ has bounded polarization, then $\Cc$ is a subclass of a bounded class of projective pairs.
	\end{proposition}
	
	\section{Isotrivial families}
	\label{sec: isotriviality}
	In this section, we work with pairs and families of pairs. To distinguish between them, we use ordinary letters $(X, B), (Y, D), \text{etc.}$ for pairs and calligraphic letters $(\X/S, \B), (\Y/S, \mathcal{D}), \text{etc.}$ for families. Let us remark that these two notions coincide When the base $S$ is a single point.
	
	Let $(\X/S, \B)$ be a projective elementary family, and $(X,B)$ be a pair. Define the sets
	\[
	\text{Iso}_X(\X/S) =  \{s\in S: \X_s\approx X\}, \quad
	\text{Iso}_{(X, B)}(\X/S, \B) =  \{s\in S: (\X_s, \B_s)\approx (X, B)\}.
	\]
	
	From now and on, we fix a log pair $(X, B)$ of dimension 2, namely a \textit{log surface}. Suppose that $(X, B)$ is a projective trm 0-pair. Since $X$ has only terminal singularities, $X$ is a smooth surface \cite[Theorem~4.5]{KM98}. We also fix a projective elementary family $(\X/S, \B)$ such that
	\begin{itemize}
		\item $f: \X \to S$ is a smooth projective family of varieties.
		\item $(\X, \B)$ is a terminal log pair.
		\item $K_\X + \B \sim_{S, \mathbb{R}} 0$, that is, $K_\X + \B \sim_{\mathbb{R}} f^*(L)$ for some $\R$-Cartier $\R$-divisor $L$ on $S$.
		\item $\text{Iso}_{(X, B)}(\X/S, \B)$ is (Zariski) dense in $S$.
	\end{itemize}
	In this section, we show that $(\X/S, \B)$ is an isotrivial family over an open dense subset of $S$. The general strategy of the proof is inspired by \cite[Theorem~4.12]{Xu25}, where an analogous statement is proved for certain 0-pairs in dimension 3 with boundary having rational coefficients.
	
	\begin{definition}
		We say that a morphism $\tau: U\to S$ between non-singular varieties is an \textit{\'etale base change} if the image $\tau(U)$ is an open dense subset of $S$, and $\tau: U\to \tau(U)$ is an \'etale covering, that is, an \'etale, finite, surjective morphism. For an elementary family $(\X/S, \B)$ with $\B = \sum b_i \B_i$ we set $(\X_U, \B_U) = (\X, \B)\times_S U = (\X\times_S U, \sum b_i (\B_i\times_S U))$. It is clear that $(\X_U/U, \B_U)\times_S U$ is an elementary family over~$U$.
	\end{definition}
	
	\begin{theorem}
		\label{th: iso_pairs}
		There is an \'etale base change $U\to S$ such that $(\X, \B)\times_S U \approx (X, B)\times_\Ko U$ over~$U$.
	\end{theorem}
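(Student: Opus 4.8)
The plan is to obtain the required trivialisation by pulling back the universal isomorphism over an Isom scheme along a suitable finite \'etale base change. Since $S$ is non-singular, I would first treat its connected components separately and so assume $S$ integral; write $K=\Ko(S)$ for its function field and $\eta$ for its generic point. On $S$-schemes, consider the functor sending $T$ to the set of isomorphisms $(X,B)\times_\Ko T\xrightarrow{\sim}(\X,\B)\times_S T$ over $T$, where an isomorphism of pairs is an isomorphism of schemes carrying one boundary divisor to the other. As $f\colon\X\to S$ and the constant family $X\times_\Ko S\to S$ are smooth and projective, the underlying scheme-isomorphism functor is representable by a separated scheme locally of finite type over $S$ (an open subscheme of the relative Hilbert scheme of graphs in $X\times_\Ko\X$, by Grothendieck's representability theorem); intersecting with the closed locus matching the prime components according to a fixed coefficient-preserving permutation, and summing over the finitely many such permutations, one obtains a scheme $I\to S$ representing the functor above, together with a universal isomorphism $\Phi\colon(X,B)\times_\Ko I\xrightarrow{\sim}(\X,\B)\times_S I$ over $I$. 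By construction a closed point $s\in S$ lies in the image of $I\to S$ exactly when $s\in\Iso_{(X,B)}(\X/S,\B)$.

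The first real step is to show that $I\to S$ is of finite type, so that its image is constructible rather than an a priori infinite union of constructible sets. Fix an $f$-very ample sheaf $\mathcal{O}_{\X}(1)$ and a very ample divisor $A$ on $X$. For a closed point $s$ with $(\X_s,\B_s)\approx(X,B)$ and an isomorphism $\varphi\colon X\xrightarrow{\sim}\X_s$, the line bundle $\varphi^{*}\mathcal{O}_{\X_s}(1)$ is very ample on $X$, and its self-intersection equals the degree of $\X_s$ with respect to $\mathcal{O}_{\X}(1)$, which is independent of $s$ by flatness of $\X/S$. Very ample line bundles of bounded self-intersection on the fixed surface $X$ form a bounded family (by Matsusaka's inequalities, exactly as in the proof of \cref{lemma: H-boundedness(varieties)}), so the intersection numbers $A\cdot\varphi^{*}\mathcal{O}_{\X_s}(1)$, together with the relevant Euler characteristics, range over a finite set; hence the graph of $\varphi$, as a closed subscheme of $X\times_\Ko\X$ polarised by $A\boxtimes\mathcal{O}_{\X}(1)$, has Hilbert polynomial in a finite set. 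Therefore $I$ meets only finitely many of the quasi-projective Hilbert pieces over $S$, so $I\to S$ is of finite type; its image is then constructible, contains the dense set $\Iso_{(X,B)}(\X/S,\B)$, hence contains a dense open subset, and in particular $I\to S$ is dominant.

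It remains to descend the generic fibre to a finite \'etale cover. The fibre $I_\eta$ is a non-empty scheme of finite type over $K$, so it has a closed point whose residue field is a finite extension $K'/K$, separable since the characteristic is zero. I would then choose a normal variety $U$ with $\Ko(U)=K'$ that is finite \'etale onto a dense open subset of $S$, obtained from the normalisation of $S$ in $K'$ after deleting a proper closed subset; as $S$ is non-singular, the structure map $\tau\colon U\to S$ is an \'etale base change. The chosen $K'$-point of $I$ lies over $\eta$ compatibly with the projection to $S$, so, after replacing $U$ by a smaller \'etale cover of a smaller dense open of $S$, it extends to an $S$-morphism $\sigma\colon U\to I$; pulling back $\Phi$ along $\sigma$ produces a log isomorphism $(\X,\B)\times_S U\approx(X,B)\times_\Ko U$ over $U$, as desired.

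I expect the main obstacle to be the finite-type reduction for $I$: without it the image of $I\to S$ is only a countable union of constructible sets, and the Zariski density of $\Iso_{(X,B)}(\X/S,\B)$ would not force $I\to S$ to dominate $S$. What saves the argument is precisely the boundedness of very ample line bundles of bounded degree on the fixed surface $X$. Besides the hypotheses that $f$ is smooth projective and that $\Iso_{(X,B)}(\X/S,\B)$ is dense, the remaining conditions on $(\X,\B)$ play no role in this step.
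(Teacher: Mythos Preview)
Your approach has a genuine gap at the finite-type step, and the gap is not repairable by the method you propose. The claim that very ample line bundles of bounded self-intersection on the fixed surface $X$ form a bounded family is false in the generality you need. The setup of \cref{th: iso_pairs} allows $B=0$ with $X$ a K3 or abelian surface, and such an $X$ can have an infinite discrete automorphism group. For $\varphi\in\text{Aut}(X)$ the class $\varphi^{*}\mathcal{O}_{\X_s}(1)$ is very ample with the same self-intersection, yet the orbit of an ample class in $\text{NS}(X)$ is typically unbounded, so $A\cdot\varphi^{*}\mathcal{O}_{\X_s}(1)$ is unbounded as $\varphi$ varies. Matsusaka's inequality only bounds $h^{0}$, not the numerical class, and the argument of \cref{lemma: H-boundedness(varieties)} bounds the \emph{varieties}, not line bundles on a fixed variety. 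Concretely, over any closed $s$ with $(\X_s,\B_s)\approx(X,B)$ the fibre $I_s$ is a torsor under $\text{Aut}(X,B)$, hence has infinitely many components; $I\to S$ is therefore not of finite type.

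Once finite type fails, the image of $I\to S$ is only a countable union of constructible sets, and Zariski density of $\Iso_{(X,B)}(\X/S,\B)$ does not force any component of $I$ to dominate $S$ (think of $\mathbb{Q}\subset\mathbb{A}^{1}$ covered by its points). Your closing remark that the terminal and Calabi--Yau hypotheses ``play no role'' is exactly the warning sign: it is precisely the condition $K_{\X}+\B\sim_{\mathbb{R}}0$ that the paper exploits to get around the infinite automorphism group. The paper's route is to pass, via an index-one cover and an equivariant resolution, to a polarized variation of Hodge structure and use the injectivity of the period map on Kodaira--Spencer kernels (Ambro's \cref{th: Ambro}) to force $S^{!}$ to be a point; this replaces the failed boundedness of $\text{Aut}(X,B)$ by a rigidity statement coming from Hodge theory. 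For real coefficients the paper first reduces to trivialising $\X/S$ via $\mathbb{Q}$-approximations and then matches the boundary using Zariski decomposition. If you want to salvage an Isom-scheme argument, you would need an independent input guaranteeing that some component of $I$ dominates $S$; absent that, the Hodge-theoretic machinery is doing essential work.
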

	
	\begin{proof}
		\textbf{Step 1.}\label{case1} The boundary $\B$ is a $\mathbb{Q}$-divisor. We introduce the setup from \cite[Section~2]{Amb05} and adapt it to our situation. For an $\mathbb{R}$-divisor $D = \sum_i d_iD_i$ we define $\lfloor D\rfloor = \sum_i\lfloor d_i \rfloor D_i$, and $\{D\} = D - \lfloor D \rfloor$. Note that the variety $\X$ is non-singular, as $\X/S$ is a smooth family with non-singular base $S$. To be definite, we put $(X, B) = (\X_{s_0}, \B_{s_0})$ for some point $s_0\in S$.
		\begin{enumerate}
			\item Let $\mu: \Y \to \X$ be an equivariant resolution of $\X$ with respect to $\B_\text{red}$  (cf. \cite[Lemma~1.1]{Amb05}), $K_\mathcal{Y} + \mathcal{D} = \mu^*(K_\X + \B)$ is the log pullback, and $\mathcal{E} = \{\mathcal{D}\}_\text{red}$. We can assume that the log pair  $(\Y, \mathcal{D})$ is log smooth over an open dense subset in $S$ by generic smoothness. 
			\item \label{property_2}
			 Let $b$ be the minimal positive integer such that  $b\mathcal{D}$ is integral and $b(K_\Y + \mathcal{D}) \sim 0$ over the generic point of $S$. Choose a rational function $\varphi\in k(\Y)$ with $b(K_\Y + \mathcal{D}) = \text{div}_\Y(\varphi)$. Let $\pi: \widetilde{\mathcal{Y}} \to \Y$ be a normalization of $\mathcal{Y}$ in $k(\Y)(\sqrt[b]{\varphi})$, that is, an index-one cover.
			\item Let $\nu: \mathcal{V} \to \widetilde{\Y}$ be an equivariant resolution with respect to $\text{Sing}(\widetilde{\Y})$, i.e. the singular locus  of $\widetilde{\Y}$. Let $h: \mathcal{V} \to S$ be the induced (projective) morphism. Without loss of generality, it can be assumed that the morphism $h$ is smooth. Since $h$ is projective, there exists an ample over $S$ line bundle $\Li$ on $\mathcal{V}$, that is, $\Li_s$ is ample for every $s\in S$ \cite[Theorem 1.7.8]{Laz04}.
			\item Let $\mathbb{D}$ be the period domain, and let $\Gamma$ be the monodromy group associated with the variation of integral polarized Hodge structures $({R}^2h_*\mathbb{Z}_\mathcal{V})_\text{prim}\otimes \mathcal{O}_{S}$\footnote{Slightly abusing the notation, we denote the complex manifold associated to $S$ by the same letter.}. Define $\Phi: S\to \Gamma \backslash \mathbb{D}$ to be the associated period map, which is an analytic map between analytic spaces (see \cref{app}).
			\item \label{property_5}
			Let $\kappa_{s}: T_{S, s} \to H^1(\Y_s, T_{\Y_s}\left(-\log \mathcal{E}_s\right))$ and $\kappa_{\mathcal{V}_s}: T_{S, s} \to H^1(\mathcal{V}_s, T_{\mathcal{V}_s})$ be the induced Kodaira--Spencer classes. These maps have the same kernel for general closed points $s\in S$, equal to $T_{\Phi^{-1}(\Phi(s)), s}\subseteq T_{S, s}$ \cite[Proposition~2.1]{Amb05}.
		\end{enumerate}
		
		\[
		\xymatrix{
			(\Y, \mathcal{D}) \ar[d]_\mu & \widetilde{\Y} \ar[l]_(0.4){\pi} & {\mathcal{V}} \ar[l]_(0.4){{\nu}} \ar[lldd]^h	\\
			(\X, \B) \ar[d]_f	\\
			S & &
		}
		\]
		
		\begin{lemma}
			\label{lemma: iso_Q_divisor}
			Assume that $\textnormal{Ker}(\kappa_s) = 0$ for general points $s\in S$. Then the base $S$ is a single point.
		\end{lemma}
		
		\begin{proof}
			 Recall that $(X, B) = (\X_{s_0}, \B_{s_0})$ for some point $s_0\in S$. Since $(\X/S, \B)$ is an elementary family, we may assume that $(\Y/S, \mathcal{D})$ is a projective elementary family after shrinking the base. After passing to an open dense subset, we can assume that the induced morphism $\mu_s: \Y_s \to \X_s$ is an equivariant resolution with respect to $(\B_s)_\text{red}$ for every $s\in S$. Hence $\text{Iso}_{(X, B)}(\X/S, \B)\subseteq \text{Iso}_{(\Y_{s_0}, \mathcal{D}_{s_0})}(\Y/S, \mathcal{D})$.
			
			Now we show that $\text{Iso}_{(\Y_{s_0}, \mathcal{D}_{s_0})}(\Y/S, \mathcal{D}) \subseteq \text{Iso}_{\widetilde{\Y}_{s_0}}(\widetilde{\Y})$ after passing to an appropriate open dense subset in $S$. Recall that $b$ is the minimal positive integer such that $b(K_\Y + \mathcal{D}) \sim 0$ over the generic point of $S$. Since $\mathcal{Y}/S$ is a smooth family, we have $b(K_\Y + \mathcal{D})|_{\Y_s} = b(K_{\Y_s} + \mathcal{D}_s)$ for every point $s\in S$. By the semicontinuity theorem \cite[Theorem~12.8]{Har77}, the set $$I_{b^\prime} = \{s\in S: h^0\left(\Y_s, \mathcal{O}_{\Y_s}(b^\prime(K_{\Y_s} + \mathcal{D}_s))\right) = 1\}$$ is locally closed (constructible) for every positive integer $b^\prime\le b$. Moreover, $I_{b^\prime}$ contains the generic point if and only if $b^\prime = b$. Hence after shrinking the base, we can assume that $b$ is the minimal positive integer such that $b(K_{\Y_s} + \mathcal{D}_s) \sim 0$ for every point $s\in S$, and $b(K_{\Y_s} + \mathcal{D}_s) = \text{div}_{\Y_s}(\varphi_s)$, where $\varphi_s$ is the restriction of the rational function $\varphi\in k(\Y)$ to $\Y_s$.
			By the flat stratification theorem, the induced morphism $\widetilde{\Y}\to S$ is flat over an open dense subset. By \cref{prop: open_properties}, the variety $\widetilde{\mathcal{Y}}_s$ is normal for every $s\in S$ after replacing the base by an open dense subset. Then, the variety $\widetilde{\Y}_s$ is an index-one cover of $(\Y_s, \mathcal{D}_s)$ for every point $s\in S$ by construction. It follows that $\text{Iso}_{(\Y_{s_0}, \mathcal{D}_{s_0})}(\Y/S, \mathcal{D}) \subseteq \text{Iso}_{\widetilde{\Y}_{s_0}}(\widetilde{\Y})$ because an index-one cover is unique up to isomorphism.
			
			Next, as $(\Y/S, \mathcal{D})$ is an elementary projective family, each irreducible component of $\text{Sing}(\widetilde{\Y})$ is horizontal over an open dense subset in $S$. We may shrink $S$, so that $\mathcal{V}_s\to \widetilde{\Y}_s$ is an equivariant resolution of singularities with respect to $\text{Sing}(\widetilde{\Y}_s)$ for every $s\in S$. This yields $\text{Iso}_{\widetilde{\Y}_{s_0}}(\widetilde{\Y}/S) \subseteq \text{Iso}_{\mathcal{V}_{s_0}}(\mathcal{V}/S)$. This implies that the set $\text{Iso}_{\mathcal{V}_{s_0}}(\mathcal{V}/S)$ is dense in $S$, because it includes $\text{Iso}_{(X, B)}(\X/S, \B)$.
			
			Finally, we use the condition $\text{Ker}(\kappa_s) = 0$, density of $\text{Iso}_{\mathcal{V}_{s_0}}(\mathcal{V}/S)$ together with global properties of the period map $\Phi$ to prove the lemma. According to \cref{lem: finiteness_polarizations}, the set $P:= \Phi (\text{Iso}_{\mathcal{V}_{s_0}}(\mathcal{V}/S))$ is finite. Let $\overline{S}\supseteq S$ be a non-singular compactification with simple normal crossing boundary. By \cite[Proof of Theorem 9.6 (p.158)]{Gri70}, the period map $\Phi$ extends to a proper analytic morphism $\overline{\Phi}: S'\to \Gamma \backslash \mathbb{D}$ defined on non-empty open subset $S'\subseteq \overline{S}$ across the locus of finite monodromy. Hence, $\overline{\Phi}^{-1}(P)$ is a closed analytic subset of $S'$, and $\overline{\Phi}^{-1}(P)$ is compact. Eventually, $\overline{\Phi}^{-1}(P)$ is a closed algebraic subset by \cite[Theorem 1.1]{BBT23}. Note that $\overline{\Phi}^{\,-1}(P)$ is (Zariski) dense in $S'$ because the former includes $\text{Iso}_{\mathcal{V}_{s_0}}(\mathcal{V}/S)$. Therefore, $\overline{\Phi}^{\,-1}(P) = S'$, and ${\Phi}^{-1}(\text{P}) = S$. In addition, $P$ is an one-point set as $S$ is irreducible. Shrinking the base, we can assume that $\text{Ker}(\kappa_s) = 0$ for every closed point $s\in S$ due to the hypothesis. Hence, for all $s\in S$ we have that $\text{Ker}(\kappa_s) = T_{S, s}$ according to \hyperref[property_5]{Property 5}. Then the assumption $\text{Ker}(\kappa_{\mathcal{V}_s}) = 0$ implies that $T_{S, s} = \{0\}$ for all $s\in S$. Since $S$ is irreducible, we conclude that $S = \{s_0\}$, as claimed.
		\end{proof}

		To conclude the proof of the first case we apply the following theorem (cf. \cite[Lemma 7.1]{Kaw85}):   
		\begin{theorem}\cite[Theorem 2.2]{Amb05}
			\label{th: Ambro}
			There exist dominant morphisms $\tau : \overline{S} \to S$ and $\varrho: \overline{S}\to S^!$ with $\tau$ generically finite and $\overline{S}, S^!$ non-singular, and there exist a log pair $(\X^!, \B^!)$ with $K_{\X^!} + \B^! \sim_{\mathbb{Q}} 0$ and a projective contraction $f^!: \X^!\to S^!$ satisfying the following properties:
			\begin{enumerate}
				\item 
				there exists an open dense subset $U\subseteq \overline{S}$ and a log isomorphism $(\X, \B)\times_S U \approx (\X^!, \B^!)\times_{S^!} U$ over~$U$;
				\item 
				The Kodaira--Spencer map $\kappa_{s^!}$ is injective for general points $s^!\in S^!$. 
			\end{enumerate}
			\[
			\xymatrix{
				(\X, \B)\ar[dd]_f&&(\X^!, \B^!)\ar[dd]^{f^!}\\
				&\overline{S}\ar[ld]_\tau \ar[rd]^\varrho&\\
				S&&S^!
			}
			\]
		\end{theorem}
	By \cref{lemma: iso_Q_divisor}, we obtain $(\X^!, \B^!) = (X, B)$ and $S^! = \text{Spec}\,\Ko$. It follows from \cref{th: Ambro} and its proof that, after shrinking the base and passing to an \'etale covering $U\to S$, there is a log isomorphism $(\X, \B)\times_S U \approx (X, B)\times_\Ko U$ over~$U$, as required. Before proving the second case, we introduce some definitions.

	\begin{definition}{\cite[Definition~2.6.9]{Ser06}}
		\label{def: isotrivial}
		Let $\X/S$ be a family of varieties.
		\begin{enumerate}
				\item $\X/S$ is called a \textit{trivial family} if there exists a variety $X$ and an isomorphism $X\times_\Ko S\approx\X$ over $S$.
				\item $\X/S$ is called an \textit{isotrivial family} if there exists an \'etale covering $U\to S$ such that $\X\times_S U\to U$ is a trivial family.
				\item $\X/S$ is called a \textit{locally isotrivial family} if every closed point $s\in S$ has a neighborhood $U$ (in the Zariski topology) such that the family $\X\times_S U\to U$ is isotrivial.
		\end{enumerate}
	\end{definition}
		
	\begin{proposition}{\cite[Proposition~2.6.10]{Ser06}}
		\label{prop: triv_def}
			Let $\X/S$ be a projective family of varieties and $X$ a projective variety. If $\textnormal{Iso}_{X}(\X/S)=\{\text{all closed points of } S\}$, then the family $\X/S$ is locally isotrivial.
	\end{proposition}

	\textbf{Step 2.} The boundary $\B$ is an $\mathbb{R}$-divisor. First, we show that the family $\X/S$ is isotrivial (see \cref{def: isotrivial} and \cref{prop: triv_def}) after shrinking the base. Recall that $(X, B) = (\X_{s_0}, \B_{s_0})$ for some point $s_0\in S$. In accordance with \cite[Proposition~3.2(3)]{Bir11}, the $\mathbb{R}$-divisor $\B$ can be written as an $\mathbb{R}$-linear combination of effective $\mathbb{Q}$-divisors $\Delta_1, \Delta_2, \dots, \Delta_I$ with coefficients $r_1, r_2, \dots, r_I\in \mathbb{R}$ respectively such that $K_\X + \Delta_i \sim_{S, \mathbb{Q}} 0$ for all $i = 1, 2, \dots, I$, and $\sum_{i=1}^I r_i = 1$. We may assume that all coefficients $r_i$ are different. Indeed, suppose that $r_{i_1} = r_{i_2} = \dots = r_{i_k}:= r$ for some indices. Define $\Delta^\prime  = \frac{1}{k}(\Delta_{i_1}+ \Delta_{i_2} + \dots + \Delta_{i_k})$ and $r^\prime = rk$. Then $r(\Delta_{i_1}+ \Delta_{i_2} + \dots + \Delta_{i_k}) = r^\prime \Delta^\prime$ and $K_\X + \Delta^\prime \sim_{S, \mathbb{Q}} 0$. Note we can assume that there is at least one index $i_0$ for which the coefficient $r_{i_0}$ is irrational (otherwise $\B$ is a $\mathbb{Q}$-divisor). Furthermore, we can assume that $I$ is the minimum possible number, all $\Delta_i$ are affinely independent over $\mathbb{Q}$, and the coefficients $r_i$ are linearly independent over $\mathbb{Q}$ in accordance with the convex approximation for $\R$-divisors (see \cite[Proposition 11.46]{Kol23}). In particular, $r_{i_0}\notin \text{Span}_\mathbb{Q}\{r_i: i\neq i_0\}$. This implies the following inclusion $$\Iso_{(X, B)}(\X/S, \B)\subseteq \Iso_{\left(X, (\Delta_{i_0})_{s_0}\right)}(\X/S, \Delta_{i_0}).$$ Hence, the latter set is dense in $S$. It is clear that the projective elementary family $(\X/S, \Delta_{i_0})$ and the klt $0$-pair $(X, (\Delta_{i_0})_{s_0})$ with $\mathbb{Q}$-coefficients satisfy the hypotheses of \hyperref[case1]{Step 1}.
	Then, there is an \'etale base change $U \to S$ such that $(\X, \Delta_{i_0})\times_S U \approx (X, (\Delta_{i_0})_{s_0})\times_\Ko U$ over $U$ by \hyperref[case1]{Step 1}. Thus, we can assume that $\X = X\times_\Ko S$, and $\Delta_{i_0} = (\Delta_{i_0})_{s_0}\times_\Ko S$.
		
	Second, we derive a version of Zariski decomposition for the $\mathbb{R}$-divisor $B$. To ease the notation, we write $(\Delta_i)_{s_0} = \Delta_i^0$. By assumption $-K_X\equiv B$; hence $-K_X$ is pseudo-effective and $\kappa(-K_X)\ge 0$. By \cite[Theorem~14.14]{Bad01} there exists a (unique) Zariski decomposition $-K_X=P_K+N$, where $P_K$ is a nef $\mathbb{Q}$-divisor, $N$ is an effective $\mathbb{Q}$-divisor whose components have a negative-definite intersection matrix, and $P_K\cdot N=0$. From the properties of the Zariski decomposition it follows that $N\le \Delta_i^0$ and $P_i=\Delta_i^0-N$ is an effective $\mathbb{Q}$-divisor such that $P_i\equiv P_K$ for all indices $i$. By the semiampleness theorem \cite[Theorem~8.1]{Fuj12} we have $K_X+\Delta_i \sim_\mathbb{Q}0$. Again by semiampleness, for the klt pair $(X,\Delta_i+\epsilon P_i)$ with $0<\epsilon\ll 1$, the effective nef $\mathbb{Q}$-divisor $\epsilon P_i$ is semiample.
		
	Next, we construct a Zariski decomposition for $\B$. For that, we prove that the $\mathbb{R}$-divisor $\B$ is numerically constant over $S$. To be precise, the following lemma holds:
		
		\begin{lemma}
			\label{lemma: numerically_equivalent}
			We have $\B_s \equiv B$ for every closed point $s\in S$.
		\end{lemma}
		
		\begin{proof}
			Write $B=\sum_i b_i B_i$ as a sum of distinct prime components, and set $\Gamma=\{b_i\ne 0\}$ with $n=\#\Gamma$. For $j=1,\dots,n$ define the reduced divisor $\overline{B}_j=\sum_{i:\, b_i=a_j} B_i$, where $a_j\in\Gamma$ and $a_j<a_{j+1}$. Then we write $B=\sum_{j=1}^n a_j \overline{B}_j$ slightly abusing the notation (since $\overline{B}_j$ is not necessarily prime). Similarly, $\B=\sum_{j=1}^n a_j \overline{\B}_j$. For every closed point $s\in S$ we have
			\[
			(\overline{\B}_j)_s-\overline{B}_j \ =\sum_{V\in\supp(\overline{\B}_j)}(V_s-V_{s_0}).
			\]
			Thus the divisors $(\overline{\B}_j)_s$ and $\overline{B}_j$ are algebraically equivalent \cite[Example~10.3.2]{Ful98} since the base is a non-singular variety. As the algebraic equivalence implies the numerical equivalence, $(\overline{\B}_j)_s\equiv \overline{B}_j$. It follows that
			\[
			\B_s=\sum_{j=1}^n a_j(\overline{\B}_j)_s \ \equiv\  \sum_{j=1}^n a_j \overline{B}_j = B.
			\]
		\end{proof}
		
		Therefore, we have $\B_s\equiv B$ for every closed point $s\in S$ according to \cref{lemma: numerically_equivalent}. Similarly, $(\Delta_i)_s \equiv \Delta_i^0$ for every index $i = 1, 2, \dots, I$ and every closed point $s\in S$. From the properties of the Zariski decomposition and the elementarity of the family $(X\times_{\mathbb{C}} S/S,\B)$ it follows that
		\[
		\B=\sum_{i=1}^I r_i\mathcal{P}_i+N\times_\Ko S,
		\]
		where $\mathcal{P}_i$ is an effective $\mathbb{Q}$-divisor such that $(\mathcal{P}_i)_s\sim_\mathbb{Q} P_K$ for every closed point $s\in S$.
		
		Finally, we use the Zariski decomposition for $\B$, finiteness results for automorphisms groups to show that the set $\Iso_{(X, B)}(\X/S, \B)$ is constructible. Let $\overline{B}_i, \overline{\B}_i$ be reduced divisors constructed in the proof of \cref{lemma: numerically_equivalent}, where $i = 1, 2, \dots, n$. Then, for each index $j\in \{1, 2, \dots, n\}$ define a closed subset $ Z_j^{(X, B)} = \bigcup_{i= j}^n \overline{B}_i$ of $Z_0^{(X, B)} = X$ with the structure of a reduced closed subscheme. The finite sequence $\underline{Z}^{(X, B)} = (Z_0^{(X, B)}; Z_1^{(X, B)}, \dots, Z_n^{(X, B)})$ is called the \text{flag scheme associated with the pair} $(X, B)$. The number $l(\underline{Z}^{(X, B)}) = n$ is called \text{the length} of $\underline{Z}^{(X, B)}$. We remark that given two pairs $(X, B)$ and $ (X^\prime, B^\prime)$ with an isomorphism $\varphi: X\to X^\prime$ we have that $\varphi$ is a log isomorphism of pairs $(X, B), \, (X^\prime, B^\prime)$ if and only if $\varphi$ is an isomorphism of the flag schemes $\underline{Z}^{(X, B)}, \, \underline{Z}^{(X^\prime, B^\prime)}$, that is, $l\left(Z^{(X^\prime, B^\prime)}\right) = n$ and  $\varphi\left({Z}^{(X, B)}_i\right) = {Z}^{(X^\prime, B^\prime)}_i$ for each $i = 0, 1, \dots, n$. Similarly, $\underline{\Z}^{(\X, \B)} = (\Z^{(\X, \B)}_0, \Z^{(\X, \B)}_1, \dots, \Z^{(\X, \B)}_n)$ is the flag scheme associated with $(\X, \B)$. Shrinking the base, we can assume that $f|_{\Z^{(\X, \B)}_i}: \Z^{(\X, \B)}_i \to S$ is a flat projective morphism for all~$i$. By \cite[Theorem 4.5.1]{Ser06}, the flag Hilbert functor is represented by a projective scheme $\Hilb$ called the flag Hilbert scheme of $X$ (see \cite[p. 230]{Ser06} for relevant definitions). By universality of $\Hilb$, there is a morphism $z: S\to \Hilb$ that sends a closed point $s$ to $$\left(\underline{\Z}^{(\X, \B)}\right)_s = \left((\Z^{(\X, \B)}_0)_s, (\Z^{(\X, \B)}_1)_s, \dots, (\Z^{(\X, \B)}_n)_s\right).$$
		
		We show that there is a constructible subset $H\subseteq\Hilb(\Ko)$ such that $z^{-1}(H) = \Iso_{(X, B)}(\X/S, \B)$. Let $\mathbf{Aut}(X)$ be the group scheme representing the automorphism functor for $X$ (see \cite[Theorem 5.23]{FGA05}). Analogously,  $\mathbf{Aut}(X, P_{i_0})$ is the group scheme such that $\mathbf{Aut}(X, P_{i_0})(\Ko) = \text{Aut}(X, P_{i_0}) = \{\varphi\in \text{Aut}(X): \varphi_*(P_{i_0}) = P_{i_0}\}$\footnote{$T(\Ko)$ is the underlying set (endowed with the induced Zariski topology) of closed points for a scheme $T$.}. Furthermore, $\mathbf{Aut}(X, P_{i_0})$ is a closed subscheme of $\mathbf{Aut}(X)$. Let us remember that for every $s\in \Iso_{(X, B)}(\X/S, \B)$ there is $\varphi_s\in \text{Aut}(X)$ such that $(\varphi_s)_*(\Delta^0_{i_0}) = \Delta^0_{i_0}$. This implies $(\varphi_s)_*(P_{i_0}) = P_{i_0}$ because the Zariski decomposition is unique. If $\kappa(P_K)=0$, then $P_K\equiv 0$ and $B\equiv N$. But, the inequality $N\le B$ implies $B=N$. By \cref{lemma: numerically_equivalent} we obtain $\B = N\times_\Ko S$. In other words, when $\kappa(P_K) = 0$, the family $(\X/S, \B)$ is trivial, and \cref{th: iso_pairs} follows. If $\kappa(P_K) = 2$, then the $\mathbb{Q}$-divisor $P_{i_0}$ is big and nef. According to \cite[Lemma 2.23]{Zha09}, the group scheme $\mathbf{Aut}(X, P_{i_0})$ is of finite type. Consider the natural morphism $\mathbf{Aut}(X)\times_\Ko \Hilb \to \Hilb$ defined on closed points by $(\varphi, (Z_0, Z_1, \dots, Z_n)) = (\varphi(Z_0), \varphi(Z_1), \dots, \varphi(Z_n))$. By Chevalley's Theorem \cite[Theorem 1.8.4]{EGAIV}, the image $H$ of the composition $$\text{Aut}(X, P_{i_0})\times_\Ko \{\underline{Z}^{(X, B)}\} \hookrightarrow \text{Aut}(X)\times_\Ko \Hilb(\Ko) \to \Hilb(\Ko)$$ is constructible because  $\mathbf{Aut}(X, P_{i_0})$ is of finite type. By construction, $z^{-1}(H) = \Iso_{(X, B)}(\X/S, \B)$ is a constructible dense subset of $S(\Ko)$. Hence, \cref{th: iso_pairs} follows from the standard isomorphism functor argument.
		\begin{lemma}[{cf. \cite[p. 393]{Amb05}}]
			\label{lem: constr_implies_etale}
			Suppose $\textnormal{Iso}_{(X, B)}(\X/S, \B)$ is constructible and dense in $S$. Then there is an \'etale base change $U\to S$ such that $(\X, \B)\times_S U \approx (X, B)\times_\Ko U$ over~$U$.
		\end{lemma}
		\begin{proof}
			Let $\mathcal{I}so_S((X, B)\times_\Ko S/S, (\X, \B)/S)$ be the standard isomorphism functor from the category of locally Noetherian schemes to the category of sets. It is represented by a scheme $\textbf{Iso}_S$ together with a morphism $I: \textbf{Iso}_S\to S$ locally of finite type (cf. \cite[Theorem 5.23]{FGA05}). Note that constructibility and density of $\textnormal{Iso}_{(X, B)}(\X/S, \B)$ implies it includes an open dense subset of $S$. Hence we can assume that the morphism $I: \textbf{Iso}_S\to S$ is surjective after shrinking the base. By the generic smoothness on the source in characteristic $0$, there is an open non-empty subset $\widetilde{U}\subseteq \textbf{Iso}_S$ such that $I: \widetilde{U}\to S$ is smooth. Then $I$ admits a section \'etale locally, as required.
		\end{proof}
		It remains to consider the case $\kappa(P_K) = 1$. Let $p: X\to Y = \textbf{Proj}\, R(X, P_{i_0})$ be the projective fibration onto the canonical model for $P_{i_0}$, where $R(X, P_{i_0}) = \bigoplus_{m\ge 0} H^0\left(X, \mathcal{O}_X\left(\lfloor mP_{i_0}\rfloor\right)\right)$. By construction, $Y$ is normal, and $p_*\mathcal{O}_X\cong \mathcal{O}_Y$ (see \cite[Proposition 7.6]{Deb01}). Let $\mathbf{Aut}(p)\subseteq \mathbf{Aut}(X)\times_\Ko\mathbf{Aut}(Y)$ denote the group scheme such that $$\mathbf{Aut}(p)(\Ko) = \{(\varphi, \psi)\in \text{Aut}(X)\times\text{Aut}(Y): \psi\circ p = p\circ \varphi \}.$$
		\begin{lemma}
			The natural projection $\mathbf{Aut}(p)\to \mathbf{Aut}(X)$ is an isomorphism
		\end{lemma} 
		\begin{proof}
			Indeed, for every (locally Noetherian) scheme $T$ over $\Ko$ and every automorphism $\varphi\in \text{Aut}(X\times_\Ko T/T)$ we get $$\varphi_*(P_{i_0}\times_\Ko T + N\times_\Ko T)=\varphi_*(\Delta_{i_0}^0)\equiv_T -K_{X\times_{\Ko}T/T}\equiv_T \Delta_{i_0}^0 =  P_{i_0}\times_\Ko T + N\times_\Ko T.$$ Note that $\left((\varphi_t)_*(\Delta_{i_0}^0) - N\right) + N$ is the Zariski decomposition of $(\varphi_t)_*(\Delta_{i_0}^0)$ for any closed point $t\in T$. From uniqueness of the Zariski decomposition it follows that $\varphi_*(N\times_{\Ko} T) = N\times_{\Ko} T$ , and $\varphi_*(P_{i_0}\times_\Ko T)\equiv_T P_{i_0}\times_\Ko T$. Hence, every fiber of $p\times_\Ko id_T$ is contracted by $(p\times_\Ko id_T)\circ\varphi$, and vice versa. Therefore, there exists an automorphism $\psi: Y\times_\Ko T \to Y\times_\Ko T$ over $T$ by the rigidity lemma \cite[Lemma 1.15]{Deb01} provided that $(p\times_{\Ko}id_T)_*\mathcal{O}_{X\times_{\Ko} T}\cong \mathcal{O}_{Y\times_{\Ko} T}$.
		\end{proof}
		\begin{lemma}
			\label{lemma: aut_relative}
			We have $\varphi_*(B) = B$ for all  $\varphi\in\textnormal{Aut}(X/Y)$.
		\end{lemma}
		\begin{proof}
			 Recall that $\Delta_i^0 \sim_\mathbb{Q} -K_X$ for all $i=1, 2, \dots, I$. Since $\varphi_*(\Delta_i^0) \sim_\mathbb{Q} \Delta^0_i$ we obtain $\varphi_*(P_i) + \varphi_*(N)\sim_\mathbb{Q} P_i + N$. From properties of the Zariski decomposition it follows that $\varphi_*(N) = N$. Hence, $\varphi_*(P_i)\sim_{\mathbb{Q}} P_i\sim_{\mathbb{Q}} P_{i_0}$ for all $i$. Therefore, $\supp P_i = p^{-1}(\{p_{ij}\})$ for some closed points $p_{ij}\in Y$. This implies that $\varphi_*(B) = B$ for all  $\varphi\in\text{Aut}(X/Y)$.
		\end{proof}
		 Note there is a natural exact sequence of group schemes:
		\begin{equation*}
			1\to \textbf{Aut}(X/Y) \to \textbf{Aut}(X) \cong \textbf{Aut}(p)\to \textbf{Aut}(Y).
		\end{equation*}
		By \cite[Theorem 5.39]{Mil17}, the quotient $\mathbf{A} = \textbf{Aut}(X) / \textbf{Aut}(X/Y)$ is a group scheme isomorphic to a closed subscheme of $\textbf{Aut}(Y)$. It is well-known that $\textbf{Aut}(Y)$ is of finite type \cite[Chapter IV]{Har77}. Hence $\mathbf{A}$ is of finite type as well. Then the composition $$\text{Aut}(X)\times_\Ko \{\underline{Z}^{(X, B)}\} \hookrightarrow \text{Aut}(X)\times_\Ko \Hilb(\Ko) \to \Hilb(\Ko)$$ factors through the natural map $\textbf{A}(\Ko)\times_\Ko \{\underline{Z}^{(X, B)}\} \to \Hilb(\Ko)$ due to \cref{lemma: aut_relative}. Again by Chevalley's Theorem, the image $H^\prime$ of the above composition is constructible. Hence $z^{-1}(H) = \Iso_{(X, B)}(\X/S, \B)$ is a constructible as well. Thus, \cref{lem: constr_implies_etale} concludes the proof of \cref{th: iso_pairs}.
	\end{proof}

	\section{Main results}
	\label{sec: main}
	In this section we prove \cref{main_intro}.
	
	\begin{proposition}
		\label{lemma: H-boundedness}
		Let $\Cc$ be a class of projective pairs $(X_\alpha,B_\alpha)$ of fixed dimension $d\in \mathbb{Z}_{>0}$. If $\Cc$ has bounded polarization, then $\Cc$ is a subclass of a bounded class of projective pairs.
	\end{proposition}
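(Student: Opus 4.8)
The plan is to adapt the proof of \cref{lemma: H-boundedness(varieties)}, now parametrizing the boundary components together with the ambient variety. First I would reduce to finitely many ``combinatorial types''. By the definition of bounded polarization the coefficients of every $B_\alpha$ lie in one finite set $\Gamma\subset\mathbb{R}$, and by \cref{remark: components} the number of components of $B_\alpha$ with nonzero coefficient is bounded by some $M\in\mathbb{N}$; discarding the zero components (which does not change $B_\alpha$ as a divisor) and fixing an ordering, the coefficient tuple of $B_\alpha$ ranges over the finite set $\bigsqcup_{m\le M}(\Gamma\setminus\{0\})^m$. Exactly as in the proof of \cref{lemma: H-boundedness(varieties)}, Matsusaka's inequalities bound $\dim_\Ko H^0(X_\alpha,H_\alpha)$ by $N+d$, so the very ample divisor $H_\alpha$ embeds $X_\alpha\hookrightarrow\Pp^{N+d}_\Ko$ with $\deg X_\alpha=H_\alpha^{d}\le N$ and $\deg B_{i\alpha}=B_{i\alpha}\cdot H_\alpha^{d-1}\le N$ for every component; hence the degree tuple $(\deg X_\alpha,\deg B_{1\alpha},\dots)$ also ranges over a finite set. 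It then suffices to treat, for each fixed number of components $m$, coefficient tuple $(b_1,\dots,b_m)$ and degree tuple $(\delta_0,\delta_1,\dots,\delta_m)$, the subclass of $\Cc$ singled out by this data, and to take the union of the finitely many resulting families.

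For one such fixed type I would build the parameter space as a product of Chow varieties,
\[
C=\mathbf{Chow}_{d,\delta_0}(\Pp^{N+d}_\Ko)\times\prod_{i=1}^{m}\mathbf{Chow}_{d-1,\delta_i}(\Pp^{N+d}_\Ko),
\]
associating to each marked pair the point $c_\alpha=\bigl([X_\alpha],[B_{1\alpha}],\dots,[B_{m\alpha}]\bigr)$, and then let $S$ be the closure of $\{c_\alpha\}$ with its reduced structure, which (passing to an irreducible component) I may assume irreducible. Pulling back the universal cycles gives closed subschemes $\X',\B'_1,\dots,\B'_m$ of $\Pp^{N+d}_\Ko\times S$, projective over $S$, whose fibers over $c_\alpha$ are $X_\alpha$ and $B_{1\alpha},\dots,B_{m\alpha}$. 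I would then stratify the base: by generic flatness \cite[Theorem~5.12]{FGA05} arrange that $\X'\to S$ and each $\B'_i\to S$ is flat, by removing $\text{Sing}(S)$ arrange that $S$ is nonsingular, and — since by \cref{prop: open_properties} the loci where the fibers of $\X'\to S$ are geometrically integral, where they are geometrically normal, and where the fibers of each $\B'_i\to S$ are geometrically integral, are all open and all contain the dense set $\{c_\alpha\}$ — shrink $S$ so that every closed fiber $\X'_s$ is a normal variety and every $(\B'_i)_s$ is a variety, necessarily of pure dimension $d-1$ by flatness.

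The step I expect to be the main obstacle is the final one: ensuring that the limiting boundary cycles $(\B'_i)_s$ are honest \emph{prime divisors on $\X'_s$} and remain pairwise distinct, i.e. that the axioms of an elementary family hold over all of (a shrinking of) $S$ rather than merely over the marked points. I would settle this by upper semicontinuity of fiber dimension: the scheme-theoretic intersection $\B'_i\cap\X'\subseteq\Pp^{N+d}_\Ko\times S$ is proper over $S$, so $\{s:\dim(\B'_i\cap\X')_s\ge d-1\}$ is closed; it contains $\{c_\alpha\}$ because $B_{i\alpha}\subseteq X_\alpha$, hence it is all of $S$, and since $(\B'_i)_s$ is integral of dimension $d-1$ this forces $(\B'_i)_s\subseteq\X'_s$, making $(\B'_i)_s$ a prime Weil divisor on the normal variety $\X'_s$. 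Likewise $\{s:\dim(\B'_i\cap\B'_j)_s\ge d-1\}$ is closed and misses every $c_\alpha$, so it is a proper closed subset that may be removed. This turns $(\X'/S,\ \sum_{i=1}^m b_i\B'_i)$ into a projective elementary family whose fiber at $c_\alpha$ is log isomorphic to $(X_\alpha,B_\alpha)$. Running over all combinatorial types produces finitely many such families; letting $\Cc'$ be the class of all projective pairs log isomorphic to a closed fiber of one of them, we get $\Cc\subseteq\Cc'$ and $\Cc'$ bounded in the sense of \cref{def: bounded_pairs}, which is what we want.
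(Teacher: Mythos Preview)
Your proof is correct and follows essentially the same approach as the paper: reduce to finitely many combinatorial types, parametrize via a product of Chow varieties in a fixed $\Pp^{N+d}_\Ko$, then stratify and shrink using generic flatness together with the openness of geometric integrality and normality (\cref{prop: open_properties}). The only organizational differences are that the paper first treats the case of reduced boundaries and then reassembles the coefficients, and that it defines each $\B_i$ directly as the scheme-theoretic intersection of the pulled-back universal cycle with $\X$ (so the containment $\B_i\subseteq\X$ is automatic), whereas you deduce $(\B_i')_s\subseteq\X'_s$ a posteriori via upper semicontinuity of fiber dimension; both routes work.
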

	
	\begin{proof}
		\textbf{Step 1.}\label{step1} We first prove the proposition in the case when the $\mathbb{R}$-divisors $B_\alpha$ are reduced, that is, $B_\alpha=\sum_i B_{i\alpha}$. We also assume that the number of prime components is the same for all $B_\alpha$ and equals some $n\in\mathbb{Z}_{\ge0}$. Note that the expression $B_\alpha=\sum_{i=1}^n b_{i\alpha}B_{i\alpha}$ imposes an ordering on the prime divisors $B_{i\alpha}$. By the standard Hilbert--Chow schemes argument (see the proof of \cref{lemma: H-boundedness(varieties)} or the proof of \cite[Lemma 2.20]{Bir19}) there exist flat projective morphisms $\X'\to S'$ and $\B_i'\to T_i$ together with closed embeddings $\varphi_\alpha:X_\alpha\hookrightarrow \Pp^M_{\mathbb{C}}$ such that $\varphi_\alpha(X_\alpha)=\X'_{s_\alpha}$ and $\varphi_\alpha(B_{i\alpha})=(\B_i')_{t_{i\alpha}}$ for some closed points $s_\alpha\in S'$ and $t_{i\alpha}\in T_i$.
		Set $S=S'\times_{\mathbb{C}}\prod_i T_i$ and $\X=\X'\times_{\mathbb{C}}\prod_i T_i$. Define
		\[
		\B_i=\left(\B_i'\times_{\mathbb{C}} S'\times_{\mathbb{C}}\Big(\prod_{j\ne i} T_j\Big)\right)\bigcap \X \ \subseteq\ \Pp^M_{S}
		\]
		to be the scheme-theoretic intersection endowed with the structure of a reduced closed subscheme. By \cref{prop: open_properties} the set
		$
		\{\,s\in S:\ \X_s \ \text{is geometrically integral and geometrically normal} \ \}
		$
		is open. Therefore, after shrinking the base, all fibers of $\X\to S$ are normal projective varieties of dimension $d$. After a base change we may assume that each closed subscheme $\B_i$ is flat over $S$. Since the set
		$
		\{\,s\in S:\ (\B_i)_s \ \text{is a geometrically integral scheme}\ \}
		$
		is open according to \cref{prop: open_properties}, after shrinking the base we may assume that $(\B_i)_s$ is a prime divisor on $\X_s$ for all $s\in S$. After shrinking the base further we may assume $(\B_i)_s\ne (\B_j)_s$ for all $s\in S$ whenever $i\ne j$. Put $\B = \sum_i \B_i$. Then $(\X/S,\B)$ is an elementary projective family, as required.
		
		\textbf{Step 2.} Now, we prove the proposition in general. By the definition of a class with bounded polarization there exists a finite set $\Gamma=\{a_1,\dots,a_r\}\subset \mathbb{R}$ such that $B_\alpha\in\Gamma$, and the number of prime components of the divisors $B_\alpha$ is bounded by some $n\in\mathbb{Z}_{>0}$ (see \cref{remark: components}). Then $\Cc$ splits into finitely many subclasses $\Cc_{n_1, n_2, \dots,n_r}$ corresponding to the pairs $(X_\alpha,B_\alpha)$ such that
		\[
		n_j=\#\{\,B_{i\alpha}\in \supp(B_\alpha):\ b_{i\alpha}=a_j\,\},\qquad 0\le\sum_{j=1}^r n_j\le n.
		\]
		Hence it suffices to prove the proposition for one fixed class $\Cc_{n_1, n_2, \dots,n_r}$. For each index $j=1, \dots ,r$ and every pair $(X_\alpha, B_\alpha)$ in  $\Cc_{n_1, n_2, \dots,n_r}$ define the reduced divisor $\ B_\alpha^{(j)}=\textstyle\sum_{i: \, b_{i\alpha}=a_j} B_{i\alpha}$. Let $\Cc_j$ be the class consisting of all pairs $\bigl(X_\alpha,\ B_\alpha^{(j)}\bigr)$. According to \hyperref[step1]{Step 1}, the classes $\Cc_j$ of pairs are subclasses of bounded classes of projective pairs. It follows that for each index $j=1,\dots r$ there exists a projective elementary family $(\X^{(j)}/S^{(j)},\B^{(j)})$ with $\B^{(j)} = \sum_{i=1}^{n_j} \B_i^{(j)}$ such that every pair in $\Cc_j$ is isomorphic to a fiber of $(\X^{(j)}/S^{(j)},\B^{(j)})$, and $\X^{(j)}/S^{(j)}\subseteq \Pp^M_{S^{(j)}}$. We use the convention that if $n_j=0$ then $\B^{(j)} = 0$. Define
		\[
		S=\prod_{j=1}^r S^{(j)}, \qquad
		\X=\bigcap_{j=1}^r\Bigl(\X^{(j)}\times_\Ko \prod_{k\ne j} S^{(k)}\Bigr) \subseteq \Pp^M_{S},\qquad
		\B=\sum_{j=1}^r a_j \sum_{i=1}^{n_j}\Bigl(\bigl(\B_i^{(j)}\times_\Ko \prod_{k\ne j} S^{(k)}\bigr)\bigcap \X\Bigr).
		\]
		By the same argument as in the first step, we conclude that, after shrinking and stratification the base, the family $(\X/S,\B)$ satisfies the conditions of \cref{def: bounded_pairs}.
	\end{proof}
	
	\begin{proposition}
		\label{prop: klt_fibers}
		Let $(\X/S,\B)$ be an elementary projective family. Suppose $(\X,\B)$ is a klt (resp. trm) log pair. Then there is an open dense subset $U\subseteq S$ such that the log pair $(\X_s,\B_s)$ has klt (resp. trm) singularities for every closed point $s\in U$.
	\end{proposition}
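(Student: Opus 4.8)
The plan is to detect klt-ness (resp.\ terminality) of $(\X_s,\B_s)$ on a log resolution obtained by restriction from the total space, reading off the discrepancies through adjunction. Since the assertion is local on $\X_s$ and unaffected by étale base change, I would first reduce to $\dim S=1$. Given a closed point $s\in S$, pick a very ample linear system on $S$ separating $2$-jets at $s$ and take general members $H_1,\dots,H_{d-1}$ of $|\mathcal I_s\otimes L|$, so that $C=H_1\cap\dots\cap H_{d-1}$ is a smooth curve through $s$. Then $\X_C=\X\times_SC$ with boundary $\B_C=\sum_i b_i(\B_i\times_SC)$ is again an elementary projective family: flatness survives base change, and the fibre conditions over points of $C$ are inherited from $S$; moreover $(\X_C)_s=\X_s$. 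Since the divisors $f^*H_j$ form a basepoint-free linear system on $\X$ with base locus $\X_s$, Bertini shows that $\X_C$ is normal, meets $\supp\B$ properly, and that $(\X_C,\B_C)$ inherits the singularity type of $(\X,\B)$ away from $\X_s$. So it suffices to work over a smooth curve.

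Assume now $S$ is a smooth curve. Then $s$ is a Cartier point, so $\X_s=f^*(s)$ is an effective Cartier divisor on $\X$, normal and prime by hypothesis, and not a component of $\B$ because every $\B_i$ is flat, hence dominant, over $S$. Adjunction then yields
\[
K_{\X_s}+\mathrm{Diff}_{\X_s}(\B)=(K_\X+\B+\X_s)\big|_{\X_s},\qquad \mathrm{Diff}_{\X_s}(\B)=\B_s+\mathrm{Diff}_{\X_s}(0),
\]
where $\mathrm{Diff}_{\X_s}(0)\ge 0$ is supported on the locus where $\X$ fails to be smooth along $\X_s$. The strategy is then: (i) show $(\X,\B+\X_s)$ is plt in a neighbourhood of $\X_s$; (ii) deduce $\mathrm{Diff}_{\X_s}(0)=0$, so that $\mathrm{Diff}_{\X_s}(\B)=\B_s$; (iii) apply inversion of adjunction to conclude that $(\X_s,\B_s)$ is klt. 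For the terminal case one strengthens the input to the analogue of ``terminal along $\X_s$'' and checks in addition that no divisor exceptional over $\X_s$ acquires discrepancy $\le 0$.

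I expect step (i) to be the main obstacle: klt-ness of the total space does not by itself force the fibre to be klt, and one must use the elementary hypothesis essentially. The cleanest route is to produce a common log resolution $g\colon\Y\to\X$ of $(\X,\B+\X_s)$ whose restriction to the strict transform of $\X_s$ is a log resolution of $(\X_s,\B_s)$, exploiting generic smoothness of the relevant strata over the curve $S$ together with normality of $\X_s$ to control the divisors of $\Y$ lying over $\X_s$; for such a divisor $E$ one has $a(E;\X,\B+\X_s)=a(E\cap\X_s;\X_s,\B_s)$, and klt-ness of $(\X,\B)$ then forces all of these log discrepancies to be positive. The delicate point is precisely the verification that this restriction is a genuine log resolution, equivalently the plt and different claims in (i)–(ii); here the flatness of each $\B_i$ with prime, pairwise distinct fibres must be used to prevent $\B$ from meeting $\X_s$ too tangentially, and normality of $\X_s$ to force the different to vanish.
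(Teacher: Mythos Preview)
Your approach via adjunction and inversion of adjunction is quite different from the paper's, and the obstacle you flag in step~(i) is real and not resolved by what you propose. The paper avoids adjunction entirely: it takes a single log resolution $\mu\colon(\X',\B')\to(\X,\B)$ of the total pair, so that every coefficient of $\B'$ is $<1$ (resp.\ $\le 0$ in the terminal case). After arranging that $(\X'/S,\B')$ is again an elementary family and that $(\X',\B')$ is log smooth \emph{relative to} $S$ (generic smoothness, at the cost of passing to a dense open in $S$), the restriction $\mu_s\colon\X'_s\to\X_s$ is an honest log resolution of $(\X_s,\B_s)$ for every remaining $s$, and elementarity forces the coefficients of $\B'_s$ to coincide with those of $\B'$. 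This handles a dense open, and Noetherian induction on the base finishes. No fibre ever gets added to the boundary, so the question of whether $(\X,\B+\X_s)$ is plt simply does not arise.

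By contrast, your route hinges on exactly that question, and klt-ness of $(\X,\B)$ alone does not force $(\X,\B+\X_s)$ to be plt: for an exceptional divisor $E$ with centre inside $\X_s$ one has
\[
a(E;\X,\B+\X_s)=a(E;\X,\B)-\operatorname{ord}_E(g^*\X_s),
\]
and since $\operatorname{ord}_E(g^*\X_s)\ge 1$, the inequality $a(E;\X,\B)>-1$ yields nothing. Your assertion that ``klt-ness of $(\X,\B)$ then forces all of these log discrepancies to be positive'' therefore needs an additional argument that you have not supplied; the elementarity hypotheses do not visibly bound $\operatorname{ord}_E(g^*\X_s)$. The reduction to $\dim S=1$ has the same defect one step up: you need $(\X_C,\B_C)$ klt \emph{along} $\X_s$, but your Bertini argument only controls singularities away from the base locus $\X_s$ of the linear system $|f^*L\otimes\mathcal I_{\X_s}|$, so the reduction presupposes an instance of what you are trying to prove. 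The paper's method sidesteps both problems by reading the fibrewise discrepancies directly off the crepant boundary $\B'$ on a global resolution, and using generic smoothness plus Noetherian induction rather than working one fibre at a time.
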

	
	\begin{proof}	
		We treat the case when $(\X,\B)$ is klt, as the terminal case is analogous. Let $q:\X'\to \X$ be a log resolution and write
		\[
		K_{\X'}+\B'=q^*(K_\X+\B).
		\]
		We may assume that $(\X'/S,\B')$ is a projective elementary family. After shrinking the base, $(\X',\B')$ is a log smooth pair over $S$ by generic smoothness. Hence for every closed point $s\in S$ the induced morphism $q_s: (\X'_s,\B'_s)\to (\X_s,\B_s)$ is a log resolution. As $(\X,\B)$ is a klt log pair, all coefficients (in the decomposition into prime components) of the $\R$-divisor $\B'$ are less than $1$. Since $(\X'/S,\B')$ is an elementary family, the coefficients of $\B'_s$ are also less than $1$. In other words, $(\X_s,\B_s)$ is klt for all closed $s\in S$.
	\end{proof}
	
	\begin{proposition}[{cf. \cite[Theorem 1]{Kaw08}}]
		\label{prop: unique_model}
		Let $(X, B)$ and $(X', B')$ be projective wlc trm models of dimension $2$. Assume there is a birational map $\varphi: X \dashrightarrow X'$ such that $\varphi_*(B) = B'$. Then $(X, B)$ and $(X', B')$ are crepant birationally equivalent. Moreover, they are log isomorphic.
	\end{proposition}
	
	\begin{proof}
		We follow the proof in \textit{loc. cit.} Let $Y$ be a common log resolution for $(X,B)$ and $(X',B')$, and let $q:Y\to X, q': Y\to X'$ be the corresponding morphisms. Let $\{E_i\}$ be the union of all exceptional divisors for $q$ and $q'$. Then we write:
		\begin{equation*}
			K_Y = q^*(K_{X} + B) - (q)_*^{-1}B + E = q'^*(K_{X'}+ B') - (q')_*^{-1}B' + E', 
		\end{equation*}
		where $E = \sum_i a_i E_i$ and $E^\prime  = \sum_i b_i E_i$. Since $(X, B)$ and $(X', B')$ are terminal, we have $E$ and $E^\prime$ are effective, $\supp(E) = \E(q)$ and $\supp(E') = \E(q')$.  We show that $E = E'$. We set $F = \sum_i \min\{a_i, b_i\}E_i$, $\overline{E} = E - F, \ \overline{E^\prime} = E^\prime - F$. In addition, $(q^\prime_*)^{-1}B^\prime \le (q_*)^{-1} B$ because $\varphi$ may contract some irreducible components of $B$.  Note that $\supp(\overline{E})\cap \supp(\overline{E^\prime}) = \emptyset$. Suppose $\overline{E}\neq 0$. Then there a curve $C\in \supp \overline{E}$ such that $\overline{E}\cdot C < 0$  according to the negativity lemma \cite[Lemma 3.6.2]{BCHM10} applied to $\overline{E}$ and $q$. Let us remember that $K_X + B$ and $K_{X^\prime} + B^\prime$ are nef. Hence we obtain the following contradiction:
		\begin{equation*}
			0\le \left((q^\prime)^*(K_{X^\prime} + B^\prime) + \left(q_*^{-1}B - (q_*^\prime)^{-1}B^\prime\right) + \overline{E^\prime}\right) \cdot C = \left(q^*(K_X + B) + \overline{E}\right) < 0.
		\end{equation*}
		Therefore, $\overline{E} = 0$ and $E\le E'$. Similarly, the reverse inequality holds as well. Indeed, by the negativity lemma applied to $\left(q_*^{-1}B - (q_*^\prime)^{-1}B^\prime\right) +\overline{E^\prime}$ and $q^\prime$, there a curve $C\in \supp (\overline{E^\prime})$ such that $\left(\left(q_*^{-1}B - (q_*^\prime)^{-1}B^\prime\right) +\overline{E^\prime}\right)\cdot C < 0$ if $\overline{E'}\neq 0$. This implies the following contradiction:
		\begin{equation*}
			0>  \left((q^\prime)^*(K_{X^\prime} + B^\prime) + \left(q_*^{-1}B - (q_*^\prime)^{-1}B^\prime\right) + \overline{E^\prime}\right) \cdot C = \left(q^*(K_X + B) + \overline{E}\right)\cdot C \ge 0.
		\end{equation*}
		Consequently, we get $E = E'$, that is, the pairs $(X, B)$ and $(X', B')$ are crepant birationally equivalent. In addition, the equality $\E(q) = \E(q')$ implies that $\varphi: X\dashrightarrow X'$ is an isomorphism in codimension 1. Then the pairs $(X, B)$ and $(X', B')$ are log isomorphic provided $\dim X = \dim X' = 2$.
	\end{proof}
	
	 Let $\Cc$ be a class of pairs. By $|\Cc|$ we denote the set of (log) isomorphism classes in $\Cc$.
	\begin{theorem}
		\label{main}
		Suppose $(X, B)$ is a projective trm 0-pair of dimension 2. Let $\Cc$ be the class of all projective wlc klt models $(X_\alpha,B_\alpha)$ in the $0$-class of $(X,B)$. Consider the class $\Dd$ of all varieties $X_\alpha$ corresponding to the pairs $(X_\alpha,B_\alpha)$ in~$\Cc$. If the class $\Dd$ has bounded polarization, then the set $|\Cc|$ is finite.
	\end{theorem}
	
	\begin{lemma}
		\label{lemma: deg_boundary}
		The class $\Cc$ has bounded polarization, and all $B_\al\in[0,1]$.
	\end{lemma}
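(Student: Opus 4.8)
The plan is to establish the two conditions required of a class with bounded polarization: that all coefficients of all the $B_\alpha$ lie in one fixed finite set $\Gamma\subset\R$, and that, for the very ample Cartier divisors $H_\alpha$ supplied by the bounded polarization of $\Dd$, the intersection numbers $(B_\alpha)_{\mathrm{red}}\cdot H_\alpha$ are uniformly bounded. Note that $B_\alpha\in[0,1]$ is automatic, since each $(X_\alpha,B_\alpha)\in\Cc$ is a wlc model and hence $B_\alpha$ is a boundary; this will also be re-derived from the coefficient analysis.

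To control the coefficients I would exploit that $(X,B)$ is terminal. Fix $(X_\alpha,B_\alpha)\in\Cc$, a common log resolution $(Y,D)$ of $(X,B)$ and $(X_\alpha,B_\alpha)$ with maps $f\colon Y\to X$ and $f_\alpha\colon Y\to X_\alpha$, and a prime component $V$ of $B_\alpha$ with coefficient $\beta$; since the pairs are klt, $\beta\in(0,1)$. Let $\widetilde V$ be the strict transform of $V$ on $Y$. Because $B_\alpha=(f_\alpha)_*D$, the coefficient of $\widetilde V$ in $D$ equals $\beta>0$; but terminality of $(X,B)$ forces every $f$-exceptional divisor to appear in the log pullback $D$ with negative coefficient. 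Hence $\widetilde V$ is not $f$-exceptional, so $V$ is the strict transform of a prime divisor $P$ on $X$, and (from $B=f_*D$) $\beta$ equals the coefficient of $P$ in $B$. As $\beta>0$ this forces $P$ to be one of the components of $B$ and $\beta$ to be the corresponding coefficient. Therefore every coefficient of every $B_\alpha$ lies in the finite set $\Gamma=\{b_1,\dots,b_k\}\cup\{0\}$, where $B=\sum_i b_iB_i$; in particular $B_\alpha\in[0,1)$, which re-proves the second assertion.

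For the degree bound I would first note, via the remark following \cref{def: 0-class}, that each $(X_\alpha,B_\alpha)$ is again a $0$-pair, so $B_\alpha\equiv -K_{X_\alpha}$. Using the bounded polarization of $\Dd$, fix $N\in\mathbb{N}$ and very ample Cartier divisors $H_\alpha$ on $X_\alpha$ with $H_\alpha^2\le N$. A general $C\in|H_\alpha|$ is a smooth curve disjoint from the (finite) singular locus of the normal surface $X_\alpha$, so adjunction on $C$ gives
\[
2g(C)-2=(K_{X_\alpha}+C)\cdot C=K_{X_\alpha}\cdot H_\alpha+H_\alpha^{2},
\]
whence $B_\alpha\cdot H_\alpha=-K_{X_\alpha}\cdot H_\alpha=H_\alpha^{2}+2-2g(C)\le N+2$. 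If $B=0$ then every $B_\alpha=0$ by the previous paragraph and $\Cc$ is trivially boundedly polarized; otherwise put $c_0=\min_i b_i>0$, so $(B_\alpha)_{\mathrm{red}}\le c_0^{-1}B_\alpha$ and therefore $(B_\alpha)_{\mathrm{red}}\cdot H_\alpha\le c_0^{-1}(N+2)$. Taking $N'=\max\{N,\lfloor c_0^{-1}(N+2)\rfloor\}$ together with the finite set $\Gamma$ above exhibits $\Cc$ as a class with bounded polarization, realized by the same divisors $H_\alpha$.

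I expect the coefficient bound to be the only step with real content, and it is precisely the terminality of $(X,B)$ that makes it work in one stroke: it pins down the boundary components of every model in the $0$-class as strict transforms of components of $B$. (For the analogous statement over an arbitrary klt $0$-pair of dimension $2$ one would instead have to invoke the finiteness of divisorial valuations of log discrepancy $\le 1$ over a fixed klt surface pair, which is not needed here.) The remaining ingredients — the adjunction estimate and the degenerate case $B=0$ — are routine.
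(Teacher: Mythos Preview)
Your argument is correct. The coefficient part is essentially the paper's: you spell out via a common log resolution and terminality of $(X,B)$ why every prime component of $B_\alpha$ is the image of a component of $B$ with the same coefficient, whereas the paper summarises this by invoking \cref{prop: unique_model} and the remark that each $(X_\alpha,B_\alpha)$ arises from $(X,B)$ by contracting boundary components. The real difference is in the degree bound. The paper passes through \cref{lemma: H-boundedness(varieties)} to place the $X_\alpha$ in a flat projective family, takes $H_\alpha$ to be a fibrewise restriction of a relatively very ample line bundle, and then reads off $-K_{X_\alpha}\!\cdot H_\alpha$ as (twice) the linear coefficient of a single Hilbert polynomial via Riemann--Roch on a resolution and rationality of klt singularities; in particular it gets that $B_\alpha\!\cdot H_\alpha$ is \emph{constant}. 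You instead keep the $H_\alpha$ supplied directly by the bounded polarization of $\Dd$ and bound $-K_{X_\alpha}\!\cdot H_\alpha\le H_\alpha^{2}+2\le N+2$ by adjunction on a general member of $|H_\alpha|$, which lies in the smooth locus of the normal surface $X_\alpha$. This is more elementary---no family, no rational singularities---at the cost of yielding only an upper bound rather than an exact value; for the lemma either suffices. The final step, passing from $B_\alpha\!\cdot H_\alpha$ to $(B_\alpha)_{\mathrm{red}}\!\cdot H_\alpha$ by dividing by the minimal nonzero coefficient, is identical in both proofs.
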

	\begin{proof}
		We first show that there exists a finite set $\Gamma\subset[0,1]$ such that $B_\al\in \Gamma$. By \cite[Corollary 1.4.3]{BCHM10}, for every $(X_\al, B_\al)$ in $\Cc$ there is a terminal log pair $(X_\al^\text{trm}, B_\al^\text{trm})$ with a birational projective morphism $q_\al: X_\al^\text{trm}\to X_\al$ such that $K_{X_\al^\text{trm}} + B_\al^\text{trm} = q_\al^*(X_\al + B_\al)$, and $B_\al^\text{trm}$ is a boundary. Note that $(X_\al^\text{trm}, B_\al^\text{trm})$ is a trm wlc model in the 0-class of $(X, B)$. According to \cref{prop: unique_model}, we can assume that $(X_\al^\text{trm}, B_\al^\text{trm}) = (X, B)$ for all $(X_\al, B_\al)$ in $\Cc$. Set $\Gamma = \{b_i: i=1, 2, \dots, n\}\subset [0,1]$, here $B = \sum_{i=1}^n b_i B_i$. Then $B_\alpha \in \Gamma$ because $B_\al = (q_\al)_*B$. Moreover, $\# \supp(B_\alpha) \le \# \supp(B) \le~n$.
		
		Next we prove that there exists $N\in\mathbb{Z}_{>0}$ and very ample Cartier divisors $H_\alpha$ on $X_\alpha$ such that $B_\alpha\cdot H_\alpha\le N$ for all log pairs $(X_\alpha,B_\alpha)$ in $\Cc$. By \cref{lemma: H-boundedness(varieties)} there exist finitely many projective families $\X^{(j)}/S^{(j)}$ such that all $X_\alpha$ are isomorphic to fibers of the families $\X^{(j)}/S^{(j)}$. Without loss of generality, we may assume there is a single family $\X/S$ and $X_\alpha=\X_{s_\alpha}$ for some closed points $s_\alpha\in S$. Choose a very ample invertible sheaf $\Li$ on $\X/S$ and set $H_\alpha=\Li_{s_\alpha}$. By elementarity of $\X/S$ there exists a polynomial $p(m)\in\mathbb{Q}[m]$ such that the Euler characteristic $\chi(mH_\alpha)$ equals $p(m)$ for all $s_\alpha\in S$. As klt singularities are rational, we have $\chi(m q_\alpha^*H_\alpha)=\chi(mH_\alpha)$, where $q_\alpha:Y_\alpha\to X_\alpha$ is a resolution of singularities. Using the Riemann–Roch formula on the smooth surface $Y_\alpha$ and the projection formula, we obtain
		\[
		\chi(mH_\alpha)=\frac{H_\alpha^2}{2}m^2-\frac{K_{X_\alpha}\!\cdot H_\alpha}{2}m+\chi(\mathcal{O}_{Y_\alpha}).
		\]
		Hence the intersection numbers $B_\alpha\cdot H_\alpha=-K_{X_\alpha}\cdot H_\alpha$ are the same for all $X_\alpha$ and equals $N=2p^\prime(0)$.
		
		It remains to bound $(B_\alpha)_{\mathrm{red}}\cdot H_\alpha$. Let $b_{\text{min}}=\min(\Gamma\setminus\{0\})$, and write $B_\alpha=\sum_i b_{i\alpha}B_{i\alpha}$ with $b_{i\alpha}\neq 0$. Then
		$$
		b_{\text{min}}\,B_{i\alpha}\cdot H_\alpha\ \le\ \sum_i b_{\text{min}}\,B_{i\alpha}\cdot H_\alpha\ \le B_\alpha \cdot H_\alpha \le\ N.
		$$
		Recall that $\#\supp (B_\alpha) \le n$. Therefore, $(B_\alpha)_{\mathrm{red}}\cdot H_\alpha \le n\,\dfrac{N}{b_{\text{min}}}$.
	\end{proof}
	
	\begin{proof}[Proof of \cref{main}]
		By \cref{lemma: deg_boundary} and \cref{lemma: H-boundedness}, we have finitely many elementary projective families $(\X^{(j)}/S^{(j)},\B^{(j)})$ such that all log pairs in $\Cc$ are isomorphic to some fibers of these families. Without loss of generality, we assume there is a single elementary projective family $(\X/S,\B)$. Let $\{s_\alpha:\alpha\in I\}\subseteq S$ be the set of closed points parametrizing all pairs in $\Cc$. Replacing the base by the closure of $\{s_\alpha\}_{\alpha\in I}$, we may assume that $\{s_\alpha\}_{\alpha\in I}$ is dense in $S$. Let $\mathcal{F}$ be the set of all closed subsets $S^\prime \subseteq S$ such that $\{s_\alpha\}_{\alpha\in I}\cap S^\prime$ is dense in $S^\prime$. Note that $\mathcal{F}$ is a well-founded set with respect to inclusion as $S$ is Noetherian. Then the set of all minimal elements for $\mathcal{F}$ is $\{\{s_\alpha\}: \al \in I\}$. We say that a property $P(S^\prime)$ holds for $S^\prime \in \mathcal{F}$ if the set $|\{(\X_s, \B_s): s\in S^\prime, s \text{ is closed}\}|$ is finite. We shall proceed by Noetherian induction on $\mathcal{F}$. Clearly, $P(\{s_\al\})$ holds for all $\al\in I$ (Noetherian induction base). Let $S^\prime\in \mathcal{F}$ be a non-minimal element of $\mathcal{F}$. Assume that $P(S^{\prime\prime})$ is true for all proper subsets $S^{\prime\prime}\subset S^\prime$ (Noetherian induction step). We claim that $P(S^\prime)$ holds. To prove the claim, it suffices to show that there exists a log pair $(X^\prime, B^\prime)$ and a dense open subset $U\subseteq S^\prime$ such that $(\X_s,\B_s)\approx (X^\prime,B^\prime)$ for all closed points $s\in U$. Then the finiteness of log pairs in the family follows by Noetherian induction (cf. \cite[Chapter II, Exercise 3.16]{Har77}).
		
		Without loss of generality, we may assume that $((\X\times_S S^\prime)/S^\prime, \B\times_S S^\prime)$ is a projective elementary family. To simplify the notation, we write $(\X/S, \B)$ for the corresponding new family. By \cite[Proposition~2.4]{HX15}\footnote{ Although, the proposition in \textit{loc. cit.} is stated for $\mathbb{Q}$-divisor $\B$, their proof is valid for $\R$-divisor $\B$.}, after shrinking the base we may assume that $(\X,\B)$ is a klt log pair. Let $\pi:(\X',\B')\to (\X,\B)$ be a $\mathbb{Q}$-factorial terminalization \cite[Corollary~1.4.3]{BCHM10}, so that
		\[
		K_{\X'}+\B'=\pi^*(K_{\X}+\B).
		\]
		Here $(\X',\B')$ has only terminal singularities and $\B'$ is a boundary. We may assume $(\X'/S,\B')$ is an elementary projective family. By \cref{prop: klt_fibers}, we can assume that for all $s\in S$ the log pair $(\X'_s,\B'_s)$ has terminal singularities. Moreover, each log pair $(X_\alpha,B_\alpha)$ in $\Cc$ is isomorphic to $(\X_{s_\alpha},\B_{s_\alpha})$, and its terminalization is isomorphic to $(\X'_{s_\alpha},\B'_{s_\alpha})$. By \cref{prop: unique_model}, all log pairs $(\X'_{s_\al},\B'_{s_\al})$ are log isomorphic to the fixed log pair $(X,B)$. Let us remember that $\X^\prime/S$ is smooth, $\X^\prime$ is non-singular, and $(K_{\X'} + \B')|_{\X'_s} = K_{\X'_s} + \B'_s$ for all closed points $s\in S$ (see \cref{sec: isotriviality}).
		
		Next, we prove that $K_{\X'} + \B' \sim_{S, \R} 0$ after shrinking the base. To be definite, we can assume that $(X, B) = (\X_{s_0}^\prime, \B_{s_0}^\prime)$ for a general point $s_0\in S$. According to \cite[Proposition 1.4.14]{Laz04} the sets $U^+ = \{s\in S: (K_{\X'} + \B')|_{\X'_s} \text{ is nef}\}$ and $U^- = \{s\in S: -(K_{\X'} + \B')|_{\X'_s} \text{ is nef}\}$ are (at most) countable intersection of open subsets of $S$. By construction, $\emptyset \neq \Iso_{(X, B)}(\X'/S,\B')\subseteq U^+\cap U^-$. Hence, $U^+\cap U^-$ contains the generic point $\eta\in S$, i. e. $K_{\X'_\eta} + \B'_\eta \equiv 0$. By $K$ we denote an algebraic closure for the residue field $\kappa(\eta)$ of $\eta\in S$. Let $\X'_K, \B'_K$ denote the corresponding pullbacks. Then $K_{\X'_K} + \B'_K \equiv 0$. By  the Lefschetz principle, we can apply \cite[Theorem 8.1]{Fuj12} to $(\X'_K, \B'_K)$. Therefore, the $\R$-divisor $\B'_K$ can be written as an $\mathbb{R}$-linear combination of effective $\mathbb{Q}$-divisors $(\Delta_1)_K, (\Delta_2)_K, \dots, (\Delta_I)_K$ with coefficients $r_1, r_2, \dots, r_I\in \mathbb{R}$ respectively such that $K_{\X'_K} + (\Delta_i)_K \sim_{\mathbb{Q}} 0$ for all $i = 1, 2, \dots, I$, and $\sum_{i=1}^I r_i = 1$. Note that all $(\Delta_i)_K$ are defined over $\kappa(\eta)$ because $\cup_i\text{supp}\left((\Delta_i)_K\right) = \supp (\B'_K)$. Hence we can write $(\Delta_i)_K = \Delta_i\times_{\kappa(\eta)} K$ for some $\mathbb{Q}$-divisors on $\X'_\eta$. Choose an integer $m\gg 1$ such that $m(\Delta_i)_K$ is Cartier, and $m(K_{\X'_K} + (\Delta_i)_K) \sim 0$ for all $i$. Put $D_i = m\left(K_{\X'_\eta} + \Delta_i\right)$, and $(D_i)_K = D_i\times_{\kappa(\eta)} K$. By the flat base change theorem \cite[Chapter III, Proposition 9.3]{Har77}, we have $H^0\left(\X'_{\eta}, \mathcal{O}_{\X'_{\eta}}\left(D_i\right)\right)\otimes_{\kappa(\eta)}K\cong H^0\left(\X'_K, \mathcal{O}_{\X'_K}\left((D_i)_K\right)\right)\approx K$ for all $i = 1, 2, \dots, I$. This implies that $m(K_{\X'_\eta} + \Delta_i)  = \text{div}_{\X'_\eta}(\varphi_i)$ for some rational functions $\varphi_i\in k(\X'_\eta)$. Therefore, $K_{\X_\eta} + \B_\eta = r_1^\prime \text{div}_{\X'_\eta}(\varphi_1) + r_2^\prime \text{div}_{\X'_\eta}(\varphi_2) + \dots + r_I^\prime\text{div}_{\X'_\eta}(\varphi_I)$, where $r_i^\prime = r_i/m$. Note that the functions $\varphi_i\in k(\X'_\eta)$ can be viewed as rational functions on $\X'$. Then the $\R$-divisor $E = K_{\X'} + \B' - r_1^\prime \text{div}_{\X'}(\varphi_1) - r_2^\prime \text{div}_{\X'}(\varphi_2) - \dots - r_I^\prime\text{div}_{\X'}(\varphi_I)$ is disjoint from $\X'_\eta$, that is, $E$ is vertical over $S$. Furthermore, $E = f^*(L)$ for some $\mathbb{R}$-divisor on $S$ because all fibers of $\X'/S$ are reduced and irreducible. Hence, $K_{\X'} + \B \sim_\R f^*(L)$, as required.
		
		By \cref{th: iso_pairs}, after shrinking the base and passing to an étale cover we may suppose that $\X'=X\times_{\mathbb{C}} S$  $\B^\prime = B\times_\Ko S$. Recall that $\pi: (X, B)\times_\Ko S \to (\X, \B)$ is the terminalization morphism. We will show that the exceptional set $\E(\pi)$ contains only prime divisors.
	\begin{lemma}
		\label{prop: contr_curve}
		Let $\varphi: X\times_\Ko S \to \mathcal{Y}$ be a birational projective contraction over $S$ that contracts a curve $C\subset X$ in the fiber $X\times_\Ko s_0$. Then $\varphi$ contracts $C\times_\Ko S$.
	\end{lemma}
	\begin{proof}
		By assumption, the morphism $\varphi$ contracts the class of the curve $[C\times_{\Ko} s_0]\in N_1(X\times_\Ko S/S)$, but $[C\times_{\Ko} s_0] = [C\times_\Ko s]$ for every closed point $s\in S$. Let $\mathcal{H}$ be an ample Cartier divisor on $\Y/S$. Hence $(\varphi^*\mathcal{H})_s\cdot C = (\varphi^*\mathcal{H})_{s_0}\cdot C = 0$ for every closed point $s\in S$. Then $\varphi$ contracts $C\times_\Ko S$.
	\end{proof}
	Hence $\E (\pi) = \bigcup_{i=1}^n \{E_i\times S\}$ for some prime divisors $E_i$ on $X$. By \cite[Lemma~4.3]{Tot10} there exist positive real coefficients $a_i$ such that $\left(\sum_{i=1}^n a_i E_i\right)\cdot E_j = -1$ for all $j$. Set  $E = \sum_{i=1}^n a_i E_i$. Next, we run the MMP for $(K_X + B + \epsilon E)\times_\Ko S$ over $S$. Let us remark that $K_X\times_\Ko S = K_{X\times_\Ko S/S}$. We prove that an extremal contraction preserves the triviality of the projective elementary family (see \cref{prop: extremal}). First, let us verify the following lemma, which is essentially a version of the rigidity lemma \cite[Lemma 1.15]{Deb01}.
	
	\begin{lemma}
		\label{prop: exc_1=exc_2}
		Suppose that $(X\times_\Ko S/S, 0)$, $(Y\times_\Ko S/S,0)$, and $(\mathcal{Y}/S,0)$ are projective elementary families, and $Y$ is a $\mathbb{Q}$-factorial surface. Let $\varphi: X\times_\Ko S \to \mathcal{Y}$ and $p\times_\Ko id_S: X\times_\Ko S \to Y\times_\Ko S$ be birational projective contractions over $S$ such that $\E(p\times_\Ko id_S)\subseteq \E(\varphi)$. Then the natural map $\varphi\circ(p\times_\Ko id_S)^{-1}: Y\times_\Ko S \dashrightarrow \mathcal{Y}$ is a morphism over~$S$.
	\end{lemma}
	
	\begin{proof}
		Let $\mathcal{A}$ be an ample Cartier divisor on $\mathcal{Y}/S$. Set $\mathcal{D} = (p\times_\Ko id_S)_*(\varphi)^*\mathcal{A}$.  Replacing $\mathcal{A}$ by its sufficiently high multiple, we can assume that $\mathcal{D}$ is a Cartier divisor on $Y\times_\Ko S/S$ because $Y$ is $\mathbb{Q}$-factorial. Note that $(p\times_\Ko id_S)^*(\mathcal{D})\cdot C = 0$ if and only if $\varphi^*(\mathcal{A})\cdot C = 0$ for curves $C\subset X\times_\Ko S$ because $\E(\varphi)=\E(p\times_\Ko id_S)$, and $\dim Y = 2$. Therefore, the negativity lemma \cite[Lemma 3.6.2]{BCHM10} implies $(p\times_\Ko id_S)^*\mathcal{D} = \varphi^*(\mathcal{A})$. In addition, the map $\alpha = (p\times_\Ko id_S)\circ\varphi^{-1}$ is a morphism. Indeed, suppose there is a curve $C\subset X\times_{\Ko} S$ such that $\varphi_*(C)$ is a curve, but $(p\times_\Ko id_S)$ is a point. Then
		$$
		0 = (p\times_\Ko id_S)^*(\mathcal{D})\cdot C = \varphi^*(\mathcal{A})\cdot C = \mathcal{A}\cdot \varphi_*(C) > 0.
		$$
		This is a contradiction. Hence $\alpha$ is a morphism.
	\end{proof}

	\begin{proposition}
	\label{prop: extremal}
	Let $\varphi: X\times_\Ko S \to \mathcal{Y}$ be an extremal contraction for  $(K_X + B + \epsilon E)\times_\Ko S$ over~$S$. Then $\E(\varphi)\subseteq \supp (E \times_\Ko S)$. Moreover, there exists a log pair $(Y, D)$ with (at worst) klt singularities such that
	$
	(\Y, \varphi_* \left(\left(B + \epsilon E \right)\times_\Ko S\right)) \approx (Y, D)\times_\Ko S
	$ over $S$.
	\end{proposition}
	
	\begin{proof}
	By \cref{prop: contr_curve} the contraction $\varphi$ is divisorial. By extremality of $\varphi$, $\E(\varphi) = \{C\times_\Ko S\}$ for some curve $C\subset X$. Since $(K_X + B + \epsilon E)\times_\Ko S \equiv_S \epsilon E\times_{\Ko} S$ and $\epsilon (E\times_{\Ko} S)\cdot C < 0$, we have $C\times S\in \supp(E\times_{\Ko} S)$. Hence $\E(\varphi)\subseteq \supp (E \times_\Ko S)$. By the cone theorem \cite[Theorem 3.7]{KM98} for $K_{X} + B + \epsilon E$, there exists an extremal contraction $p: X \to Y$ of the curve $C$ such that $(Y, D)$ is klt, where $D = (p)_*\left(B + \epsilon E\right)$. Put $p\times_\Ko id_S: X\times_\Ko S \to Y\times_\Ko S$. It is clear that $\text{Exc}(p\times_\Ko id_S) = \{C\times_\Ko S\}$.
	
	By \cref{prop: exc_1=exc_2}, the birational maps $\alpha = \varphi \circ (p\times_\Ko id_S)^{-1}$ and $\alpha^{-1} = (p\times_\Ko id_S)\circ \varphi^{-1}$ are morphisms over $S$, i.e. $\alpha$ is an isomorphism over $S$. Then it induces a log isomorphism
	$
	(Y, D)\times_\Ko S \approx (\Y, \varphi_* \left(\left(B + \epsilon E \right)\times_\Ko S\right))
	$
	because $\text{Exc}(p\times_\Ko id_S) = \text{Exc}(\varphi)$.
	Therefore, \cref{prop: extremal} is proved.
	\end{proof}

	\begin{remark}
	More generally \cite[Theorem~30]{Kol25}, Koll\'ar established that even small modifications preserve the triviality of the family.
	\end{remark}
	
	Recall that $\text{Exc}(\pi) = \supp (E\times_\Ko S)$, where $\pi: (X, B)\times_\Ko S\to (\X,\B)$ is a terminalization. By \cref{prop: extremal}, we may assume that the first step of the MMP for $(K_X + B + \epsilon E)\times_\Ko S$ over $S$ is the projective contraction $p\times_\Ko id_S: X\times_\Ko S \to Y_1\times_\Ko S$ contracting $E_i\times_\Ko S\in \supp (E\times_\Ko S)$ for some $i=1, 2, \dots, n$. Without loss of generality, let $i=1$. Moreover, $(Y_1, D_1)$ is klt, where $D_1 = p_*(B + \epsilon E)$. By \cref{prop: exc_1=exc_2}, there is a morphism $\pi_1: Y_1\times_\Ko S\to \X$ such that $\pi = \pi_1 \circ (p\times_\Ko id_S)$. Hence, for every closed points $s\in S$ we have $(p_* (E\times_{\Ko} s))^2 < 0$ because $p_* (E\times_{\Ko} s)$ is supported on the exceptional curves of $(\pi_1)_s: Y_1\times_\Ko s \to \X_s$. It follows from \cref{prop: extremal} and its proof that the second step of the MMP for $(K_X + B + \epsilon E)\times_\Ko S$ over $S$ contracts $E_i \in \supp (p\times_\Ko id_S)_*(E\times_\Ko S)$ for some $i = 2, 3, \dots, n$ and preserves the triviality. By induction on the number of prime components for $E$, we conclude that each step of MMP contracts a prime component of $E\times_\Ko S$. In particular, the MMP terminates in finitely many steps. The outcome of the MMP is a projective elementary family $(Y_n, D_n)\times_\Ko S$ over $S$. Let
	$
	\pi_{MMP}:  X\times_{\Ko} S\to Y_n\times_\Ko S
	$
	be a morphism (over $S$) induced by the MMP. By construction,
	$
	\E(\pi) = \E(\pi_{MMP}) = \supp (E\times_\Ko S).
	$
	By \cref{prop: exc_1=exc_2}, the crepant birational map
	$
	\pi\circ \pi_{MMP}^{-1}: (Y_n, D_n)\times_\Ko S \dashrightarrow (\X, \B)
	$
	is an isomorphism over $S$. This completes the proof of \cref{main}.
	\end{proof}
	
	\section{Applications}
	Finally, we prove \cref{th: intro} by applying \cref{main_intro}.
	\begin{theorem}
		\label{th: final}
		The number of projective wlc klt models in a fixed $0$-class of log surfaces is finite up to log isomorphism.
	\end{theorem}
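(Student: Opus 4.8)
Fix a $0$-class of log surfaces and let $\Cc$ be its class of projective wlc klt models; the goal is to show that $\Cc$ contains only finitely many log surfaces up to log isomorphism. Since crepant birationally equivalent pairs have the same pullback of $K+B$ to a common log resolution (\cref{def: 0-class}), the condition $K+B\equiv 0$ is an invariant of the $0$-class, so either $K_{X_\alpha}+B_\alpha\equiv 0$ for every member $(X_\alpha,B_\alpha)$ of $\Cc$, or for none. The plan is to settle the Calabi--Yau case $K+B\equiv 0$ by reducing it to \cref{main}, and to dispose of the remaining case separately through the log canonical model.

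Assume first that $K+B\equiv 0$. I would pick any $(X_0,B_0)\in\Cc$ and pass to a terminalization $(X^{t},B^{t})\to(X_0,B_0)$ via \cite[Corollary~1.4.3]{BCHM10}. Then $(X^{t},B^{t})$ is a projective trm $0$-pair of dimension~$2$ in the same $0$-class, $\Cc$ is exactly its class of wlc klt models, and $\Dd$ --- the associated class of underlying varieties --- is as in \cref{main}; hence it suffices to prove that $\Dd$ has bounded polarization. The key point is that every $(X_\alpha,B_\alpha)\in\Cc$ is crepant birationally equivalent to the \emph{fixed} pair $(X^{t},B^{t})$, so $a(E;X_\alpha,B_\alpha)=a(E;X^{t},B^{t})$ for every divisorial valuation $E$, and every coefficient of $B_\alpha$ equals $1-a(E;X^{t},B^{t})$ for a suitable $E$. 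Now a fixed klt surface pair is $\epsilon$-klt for some $\epsilon>0$: on a log resolution its boundary has simple normal crossing support and coefficients in $[0,1)$, so all log discrepancies over it --- and therefore over $X^{t}$ --- are bounded below by a positive constant. Consequently every $(X_\alpha,B_\alpha)$, and so every $X_\alpha$ (using $B_\alpha\ge 0$), is $\epsilon$-klt with one and the same $\epsilon$, while by \cref{lemma: deg_boundary} the coefficients of all $B_\alpha$ lie in a common finite set $\Gamma\subset[0,1)$.

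Thus $\{(X_\alpha,B_\alpha)\}$ is a family of $\epsilon$-lc log Calabi--Yau surface pairs with coefficients in $\Gamma$. Invoking the known boundedness of such pairs, the class $\Cc$ is bounded in the sense of \cref{def: bounded_pairs}; then a relatively very ample invertible sheaf on each of the finitely many bounding elementary families has constant Euler characteristic on the fibres by elementarity, which yields very ample divisors $H_\alpha$ on $X_\alpha$ with $H_\alpha^{2}\le N$ for a uniform $N$, i.e.\ $\Dd$ has bounded polarization. Applying \cref{main} then finishes the Calabi--Yau case. I expect this boundedness input to be the main obstacle: the $\epsilon$-lc hypothesis is indispensable --- finiteness already fails for log canonical but non-klt pairs such as $(\Pp^{2},B)$ with $B$ a smooth plane cubic --- and it is precisely the fixedness of the $0$-class that supplies a uniform $\epsilon$, via the crepant comparison with $(X^{t},B^{t})$.

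It remains to treat $K+B\not\equiv 0$. Here $K_{X_\alpha}+B_\alpha$ is semiample on the klt surface $X_\alpha$ by \cite[Theorem~8.1]{Fuj12}, so the $0$-class has a log canonical model $(Z,B_Z)$, unique up to log isomorphism, and every $(X_\alpha,B_\alpha)$ admits a crepant birational contraction onto it. By the discrepancy computation above $(Z,B_Z)$ is again $\epsilon$-lc, so only finitely many divisorial valuations over $Z$ have log discrepancy in $[0,1]$; any $\mathbb{Q}$-factorial crepant log model of $(Z,B_Z)$ extracts a subset of these, hence there are finitely many such models up to log isomorphism, and each member of $\Cc$ is one of them when $\dim Z=2$. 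When $\dim Z=1$ one works relatively over the fixed base curve, using the canonical bundle formula to reduce to the fibrewise Calabi--Yau analysis. Combining the two cases gives the asserted finiteness.
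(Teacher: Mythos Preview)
Your overall architecture matches the paper's: split into the Calabi--Yau case and the positive-$\kappa$ case, and in the former feed bounded polarization of $\Dd$ into \cref{main}. The positive-$\kappa$ argument is close to the paper's (the paper phrases it as $\E(f_\alpha)\subseteq\E(g)$ for $\kappa=2$ and $\E(f_\alpha)\subseteq\{$contractible vertical curves$\}$ for $\kappa=1$, which is your ``finitely many extractable valuations'' statement in slightly different language), though your $\dim Z=1$ sentence is too sketchy to count as a proof.

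There is, however, a genuine gap in the Calabi--Yau case. The ``known boundedness of $\epsilon$-lc log Calabi--Yau surface pairs with coefficients in $\Gamma$'' that you invoke is \emph{false} as stated: K3 surfaces with $B=0$ are canonical (hence $\epsilon$-lc for every $\epsilon$) log Calabi--Yau pairs, and they form a $19$-dimensional moduli, so they are not bounded. The relevant input, Alexeev's theorem \cite[Theorem~6.9]{Ale94}, gives bounded polarization for MR $\epsilon$-klt surfaces with $-(K+B)$ nef \emph{only after excluding} the case $B_\alpha=0$, $K_{X_\alpha}\equiv 0$, Du Val singularities. The paper therefore splits Case~3 further: when the terminal model has $B\ne 0$ one applies Alexeev and then \cref{main}, exactly as you propose; but when $B=0$ (so every $X_\alpha$ is a Du Val contraction of a fixed smooth $K_X\equiv 0$ surface) one must argue differently, and the paper appeals to Kawamata's finiteness of birational contractions \cite[Theorem~2.1]{Kaw97}, which ultimately rests on Sterk's cone theorem. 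Your proposal misses this dichotomy entirely; without it the boundedness step does not go through.

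A smaller point: your appeal to \cref{lemma: deg_boundary} for the finiteness of $\Gamma$ is legitimate, since that part of the lemma only uses \cref{prop: unique_model}; but be aware that the \emph{bounded polarization} conclusion of \cref{lemma: deg_boundary} already assumes $\Dd$ has bounded polarization, so you cannot cite the lemma for that.
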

	\begin{proof}
		Let $(X_0,B_0)$ be a projective log surface, and let $\Cc$ be the class of all its projective wlc klt models $(X_\alpha,B_\alpha)$. Let $q_\alpha:(X_\alpha',B_\alpha')\to (X_\alpha,B_\alpha)$ be a terminalization morphism \cite[Corollary 1.4.3]{BCHM10}. By \cref{prop: unique_model}, there is a unique (up to log isomorphism) projective wlc trm log surface $(X,B)$ in the class $\Cc$. Therefore, we may assume $(X_\alpha',B_\alpha')=(X,B)$ for all $\alpha$. By \cite[Theorem~8.1]{Fuj12}, the $\mathbb{R}$-Cartier $\R$-divisor $K_X+B$ is semiample, that is, there is a projective contraction $p: X\to Y$ such that $K_X + B \sim_\R \sum_i r_i p^*H_i$ for some ample Cartier divisors $H_i$ on $Y$, and $r_i\in [0, 1]$.
		
		\textbf{Case 1:} $\kappa(K_X+B)=2$, that is, $p$ is a birational contraction. Then for every $E\in \E(q_\al)$ we have $(K_X + B)\cdot E = q_\al^*(K_{X_\al} + B_\al)\cdot E = 0$ by the projection formula. This implies that the natural map $p_\al: X_\al \dashrightarrow Y$ is a morphism, and $\E(q_\alpha)\subseteq \E(p)$ for all $\alpha$ according to \cref{prop: exc_1=exc_2}. Note that if $\E(q_\alpha)=\E(q_\beta)$ then $X_\alpha\cong X_\beta$ again by \cref{prop: exc_1=exc_2}. Moreover, we have $(X_\alpha,B_\alpha)\approx (X_\beta,B_\beta)$ provided that $(X_\al, B_\al)$ and $(X_\beta, B_\beta)$ are crepant birationally equivalent. Since $\E(p)\subset X$ is a closed subset of dimension 1, there are only finitely many possibilities for $\E(q_\al)\subseteq \E(p)$. Thus, the finiteness of the set $\E(p)$ implies that the set of (log) isomorphism classes in $\Cc$ is finite.
		
		\textbf{Case 2:} $\kappa(K_X+B)=1$, that is, $g$ is a projective fibration onto a smooth projective curve. Then for every $E\in \E(q_\al)$ we have $(K_X + B)\cdot E = q_\al^*(K_{X_\al} + B_\al)\cdot E = 0$ by the projection formula. This implies that there is a projective contraction $p_\al: X_\al \to Y$ according to the rigidity lemma (cf. \cite[Lemma 1.15]{Deb01}). Hence $\E(q_\alpha)\subseteq \E(X/Y):=\{\,C\subset X:\ \text{a contractible over $Y$ curve}\,\}$ for all~$\alpha$. Since $\dim X/Y = 1$, all contractible over $Y$ curves $C\subset X$ are included in the singular fibers of $p$. Hence, the set $\E(X/Y)$ is finite. If $\E(q_\al) = \E(q_\beta)$ then $(X_\al, B_\al)\cong (X_\beta, B_\beta)$ by \cref{prop: exc_1=exc_2}. Thus, the finiteness of the set $\E(X/Y)$ implies that the set of (log) isomorphism classes in $\Cc$ is finite.
		
		\textbf{Case 3:} $\kappa(K_X+B)=0$. Then $K_X+B\equiv 0$, and all $(X_\alpha,B_\alpha)$ are $0$-pairs. Consider the class $\Dd$ of all varieties $X_\alpha$ corresponding to the pairs $(X_\alpha,B_\alpha)$ in $\Cc$.
		
		\textbf{Case 3.1:} $B\neq 0$. Then the class $\Dd$ has bounded polarization by the following result due to Alexeev.
		\begin{theorem}{\cite[Theorem 6.9]{Ale94}}
			\label{th: Alexeev}
			Fix $\epsilon>0$. Consider the class $\Dd$ of projective normal surfaces $X_\alpha$ such that each $X_\alpha$ admits a boundary $B_\alpha$ satisfying:
			\begin{enumerate}
				\item The pair $(X_\alpha,B_\alpha)$ is an MR $\epsilon$-klt log pair.
				\item The $\mathbb{R}$-Cartier $\R$-divisor $-(K_{X_\alpha}+B_\alpha)$ is nef.
				\item The case, when $B_\alpha = 0, K_{X_\alpha}\equiv 0$, and $X_\alpha$ has Du Val singularities, is excluded.
			\end{enumerate}
			Then the class $\Dd$ has bounded polarization.
		\end{theorem}
		Condition (1) means that the inequalities in the definition of an $\epsilon$-klt log pair (\cref{def: sing}) need not hold for all log resolutions, but hold at least for the minimal resolution of $X_\alpha$. In our situation all log pairs $(X_\alpha,B_\alpha)$ satisfy this condition. Clearly, the contraction $q_\al: X\to X_\al$ factors through the minimal resolution for $X_\al$ because $X$ is smooth \cite[Theorem~4.5]{KM98}. Then $(X_\al, B_\al)$ is a MR $\epsilon$-klt log pair for any $0<\epsilon <  \min_i\{1-b_i\}$, where $B = \sum_i b_iB_i$. By the main \cref{main}, the set of (log) isomorphism classes in $\mathfrak{C}$ is finite.
		
		\textbf{Case 3.2:} $B=0$. Hence $K_X\equiv 0$. By \cite[Theorem 2.1]{Kaw97}, the surface $X$ admits only finitely many birational contractions up to automorphism.
	\end{proof}
	
	A key input in \cite[Theorem~2.1]{Kaw97} is Sterk’s cone theorem for K3 surfaces \cite{Ste85}, proved via the Torelli theorem. In future work we plan to develop an algebraic approach, which is independent of the Torelli Theorem, to bound polarization of Du Val K3 surfaces in a fixed birational class. Then, their finiteness up to isomorphism will  follow from  \cref{main}.
	
	By \cite[Theorem~2.14]{CL14}, the relative cone conjecture in dimension $\le d$ together with the MMP in dimension $d$ implies finiteness of minimal models up to isomorphism. Since the log version of the cone conjecture \cite{Tot10} is proved for log surfaces, \cref{th: final} is known to experts. However, the principal result of this paper is \cref{main} --- a method reducing the finiteness problem for models to boundedness of their polarization. A generalization of \cref{main} to higher dimensions will be treated in a forthcoming paper.

	\newpage
	\appendix
	\section{}\label{app}
	Suppose $h: \mathcal{V}\to S$ is a smooth projective morphism between non-singular varieties over $\Ko$. We write $S^\textnormal{an}$ for the analytification of $S$. Choose an ample over $S$ line bundle $\Li$ on $\mathcal{V}$, a closed point $s_0\in S$, and put $L = \Li_{s_0}$. Assume $d = \dim \mathcal{V}/S\ge 2$. Then the bilinear form $Q(x, y) = x\cdot y\cdot L^{d-2}_{s_0}$ is a Hodge-theoretic polarization on $V_\mathbb{Z} := H^2_\text{prim}(\mathcal{V}_{s_0}, \mathbb{Z}) = H^2_\text{prim}(\mathcal{V}_{s_0}, \mathbb{Q})\cap H^2(\mathcal{V}_{s_0}, \mathbb{Z})$. In the standard way \cite{Gri70}, we define a \textit{variation of integral polarized Hodge structures} of weight~2 associated with  $({R}^2h_*\mathbb{Z}_\mathcal{V})_\text{prim}\otimes \mathcal{O}_{S^\textnormal{an}}$ and $Q$. Let $\rho: \pi_1(S^\textnormal{an}, s_0)\to \text{Aut}(V_\mathbb{Z}, Q)$ be the monodromy representation. Then $\Gamma = \rho(\pi(S^\textnormal{an}, s_0))$ is \textit{the monodromy group}, and $\Gamma^a = \text{Aut}(V_\mathbb{Z}, Q)$ is \textit{the arithmetic monodromy group}. By $\Phi: S^\textnormal{an}\to \Gamma \backslash \mathbb{D}$ we denote the associated period map, where $\mathbb{D}$ is the period domain for integral polarized Hodge structures $({V}_\mathbb{Z}, F^\bullet {V}_\Ko, Q)$ of weight 2. This map is an analytic map between analytic spaces \cite[Lemma–Definition 4.6.3]{CMP17}. Similarly, the map $\Phi^a: S^\textnormal{an}\to \Gamma^a\backslash\mathbb{D}$ is analytic as well.  For every point $x\in {\Gamma}^a \backslash {\mathbb{D}}$ we choose a representative and denote by $V(x)$ its underlying unpolarized integral Hodge structure.
	\begin{lemma}
		\label{lem: finiteness_polarizations}
		Let $\Phi: S^\textnormal{an}\to \Gamma \backslash \mathbb{D}$ be the period map associated to the variation of integral polarized Hodge structures of weight 2 as above. Then the image of $\textnormal{Iso}_{\mathcal{V}_{s_0}}(\mathcal{V}/S)$ under $\Phi$ is finite.
	\end{lemma}
	\begin{proof}
	First, we prove that the image of $\textnormal{Iso}_{\mathcal{V}_{s_0}}(\mathcal{V}/S)$ under $\Phi^a$ is finite. By \cite[Corollary 2.3.5]{CMP17} the polarization $Q$ on $V_\mathbb{Z}$ can be extended to all of $\widehat{V}_\mathbb{Z}:= H^2(\mathcal{V}_{s_0}, \mathbb{Z})\cap H^2(\mathcal{V}_{s_0}, \mathbb{Q})$. In fact, there is an explicit formula for the extended polarization: $\widehat{Q}(x, y) = Q(x_0, y_0) - abL^d,$ where  $x = x_0 + aL, y = y_0 + bL$, and $x_0,y_0\in V_\mathbb{Z}$. Let $\widehat{\Phi}^a: S^\textnormal{an}\to  \widehat{\Gamma}^a \backslash \widehat{\mathbb{D}}$ be the period map associated with $(\text{R}^2h_*\mathbb{Z}_\mathcal{V}/\text{torsion})\otimes \mathcal{O}_{S^\textnormal{an}}$ and $\widehat{Q}$, where $\widehat{\Gamma}^a = \text{Aut}(\widehat{V}_\mathbb{Z}, \widehat{Q})$.  Set $x_0 = \widehat{\Phi}^a(s_0)$. Then $$\widehat{\Phi}^a(\textnormal{Iso}_{\mathcal{V}_{s_0}}(\mathcal{V}/S)) \subseteq \{x\in \widehat{\Gamma}^a \backslash \widehat{\mathbb{D}}: V(x)\approx V(x_0)\}.$$ By \cref{lem: finitenes_polarizations_orig} the latter set is finite. Let $(V(x_i), \widehat{Q})$ be integral polarized Hodge structures representing all points in $ \{x\in \widehat{\Gamma}^a \backslash \widehat{\mathbb{D}}: V(x)\approx V(x_0)\}$, and $i = 0, 1, \dots, I$. By $V^{(1,1)}_\R (x_i)$ we denote the real $(1,1)$-part of $V(x_i)$. Then for every index $i$ the set $\{L'\in V^{(1,1)}_\R (x_i)\cap \widehat{V}_{\mathbb{Z}}: \widehat{Q}(L', L') = -L^d\}$ is finite because the form $\widehat{Q}$ is negative-definite on $V^{(1,1)}_\R (x_i)$. Therefore, each $V(x_i)$ admits at most finitely many primitive decompositions with respect to $\widehat{Q}$. Let $V(x_{ij})$ be all (unpolarized) integral Hodge structures induced by $(V(x_i), \widehat{Q})$, and $j = 0, 1, \dots, J_i$. Recall that $Q = \widehat{Q}|_{V_\mathbb{Z}}$. This implies that $P = \Phi^a(\textnormal{Iso}_{\mathcal{V}_{s_0}}(\mathcal{V}/S))\subseteq \{x\in \Gamma^a\backslash\mathbb{D}: V(x)\approx V(x_{ij}) \text{ for some } i, j\}$. The latter set is finite due to \cref{lem: finitenes_polarizations_orig}. 
		
	Second, we prove the lemma. Due to \cite[Theorem 1.1]{BBT23}, the set $S':=(\Phi^a)^{-1}\left(P\right)$ is a closed algebraic subset of $S$, and $\Phi(S')$ is algebraic. By definition of the monodromy representation, $\Gamma$ is a subgroup of the (at most) countable group $\Gamma^a \subseteq \text{Aut}(V_\mathbb{Z})$. Hence the fibers of the quotient $\Gamma \backslash \mathbb{D} \to \Gamma^a \backslash \mathbb{D}$ are (at most) countable sets. From algebraicity of $I$ it follows that $\Phi(S')$ has (at most) finitely many connected components. Thus, the induced map $\Phi(S') \to \Phi^a(S')$ is finite. Therefore, $\Phi(S')$ is finite as well as $\Phi^a(S') = P$. This concludes the proof.
	\end{proof}
	Let us remark that the period domain $\mathbb{D}$ and the arithmetic monodromy group $\Gamma^a$ are defined for arbitrary polarized Hodge structures of any weight $n\in \mathbb{Z}_{\ge 0}$, which may not come from geometry.
	\begin{lemma}
		\label{lem: finitenes_polarizations_orig}
		Let ${\mathbb{D}}$ be the period domain for polarized integral Hodge structures $({V}_\mathbb{Z}, F^\bullet {V}_\Ko, Q)$  of weight $n\in \mathbb{Z}_{\ge0}$. Then for any $x_0\in {\Gamma}^a \backslash {\mathbb{D}}$ the set $\{x\in {\Gamma}^a \backslash {\mathbb{D}}: V(x)\approx V(x_0)\}$ is finite, where $\Gamma^a = \textnormal{Aut}(V_\mathbb{Z}, Q)$ is the arithmetic monodromy group.
	\end{lemma}
	\begin{proof}
		The proof of this lemma is taken verbatim from \cite[Lemma 2.22]{BFMT25}. In \cite{NN81}, this lemma is proven for abelian varieties, and the same proof works in this more general context. 
		Next, consider the algebra $B=\text{End}(V(x_0))$ of unpolarized Hodge endomorphisms. Since polarizable Hodge structures are a semisimple category, it follows that $B_{\mathbb{Q}}$ is a semisimple algebra over $\mathbb{Q}$. Moreover, the polarization gives an involution $\theta$ of $B_\mathbb{Q}$; let $ B^{\theta}\subset B_\mathbb{Q}$ be the elements fixed by $\theta$. Letting $G$ be the algebraic group $B^{\times}$, we see that $\theta$ gives an involution $G\to G^{\text{op}}$. We shall prove that $V(x_0)$ admits only finitely many orbits of polarizations of discriminant $\mathrm{disc}({Q})$ for the action of $G(\mathbb{Z})$, which will prove the lemma.
		
		Let $P$ be the set of polarizations of $V(x_0)$. There is a natural injection $\iota:P\to B_\mathbb{Q}$
		given by $\iota(Q'):=  \phi^{-1}_{{Q}}\circ \phi_{Q'}$ where $\phi_{Q'}:V(x_0)\to V(x_0)^{\vee}$. Let $F\subset B^{\theta}$ be the minimal lattice containing $\iota(P)$. There is a natural action of $G$ on $B^{\theta}$ given by $\pi(g)s: \theta(g^{-1})\circ s\circ g^{-1}$ for which $\iota$ is equivariant, and for which $G(\mathbb{Z})$ preserves $F$. Finally, let $F_1:= \{f\in B^{\theta}_{\mathbb{C}}\mid \det(f)=1\}$. As in \cite[Lemma 3.1]{NN81} (this lemma uses only that $(B_\mathbb{Q},\theta)$ is an involutive semisimple algebra) the orbits of $G_\mathbb{C}$ on $F_1$ are finite in number and closed. The result now follows by \cite[Thm 6.9]{BH62}.
	\end{proof}
	\newpage
	\bibliographystyle{plain}
	\begin{bibdiv}
	\begin{biblist}
		\bib{Ale94}{article}{
			author={Alexeev, V.},
			title={Boundedness and $K^2$ for log surfaces},
			journal={Internat. J. Math.},
			volume={5},
			date={1994},
			pages={779--810},
			number={6},
		}
		
		\bib{Amb05}{article}{
			author={Ambro, F.},
			title={The moduli $b$-divisor of an lc-trivial fibration},
			journal={Compos. Math.},
			volume={141},
			date={2005},
			pages={385--403},
			number={2},
		}
		
		\bib{Bad01}{book}{
			author={B{\u{a}}descu, L.},
			title={Algebraic Surfaces},
			publisher={Springer},
			date={2001},
		}
		
		\bib{BBT23}{article}{
			author={Bakker, B.},
			author={Brunebarbe, Y.},
			author={Tsimerman, J.},
			title={$o$-minimal {GAGA} and a conjecture of {Griffiths}},
			journal={Invent. Math.},
			volume={232},
			date={2023},
			pages={163--228},
			number={1},
		}
		
		\bib{BFMT25}{misc}{
			author={Bakker, Benjamin},
			author={Filipazzi, Stefano},
			author={Mauri, Mirko},
			author={Tsimerman, Jacob},
			title={Baily--Borel compactifications of period images and the b-semiampleness conjecture},
			date={2025},
			note={\href{https://arxiv.org/abs/2508.19215}{arXiv:2508.19215}},
		}
		
		\bib{Bir11}{article}{
			author={Birkar, C.},
			title={On existence of log minimal models {II}},
			journal={J. Reine Angew. Math.},
			volume={658},
			date={2011},
			pages={99--113},
		}
		
		\bib{Bir19}{article}{
			author={Birkar, C.},
			title={Anti-pluricanonical systems on {Fano} varieties},
			journal={Ann. of Math.},
			volume={190},
			date={2019},
			pages={345--463},
			number={2},
		}
		
		\bib{BCHM10}{article}{
			author={Birkar, C.},
			author={Cascini, P.},
			author={Hacon, C. D.},
			author={McKernan, J.},
			title={Existence of minimal models for varieties of log general type},
			journal={J. Amer. Math. Soc.},
			volume={23},
			date={2010},
			pages={405--468},
			number={2},
		}
		
		\bib{BH62}{article}{
			author={Borel, A.},
			author={Harish-Chandra},
			title={Arithmetic subgroups of algebraic groups},
			journal={Ann. of Math.},
			volume={75},
			date={1962},
			pages={485--535},
			number={3},
		}
		
		\bib{CMP17}{book}{
			author={Carlson, James},
			author={M{\"u}ller-Stach, Stefan},
			author={Peters, Chris},
			title={Period Mappings and Period Domains},
			publisher={Cambridge University Press},
			date={2017},
		}
		
		\bib{CL14}{article}{
			author={Cascini, P.},
			author={Lazi\'{c}, V.},
			title={On the number of minimal models of a log smooth threefold},
			journal={J. Math. Pures Appl.},
			volume={102},
			date={2014},
			pages={597--616},
			number={3},
		}
		
		\bib{Deb01}{book}{
			author={Debarre, Olivier},
			title={Higher-Dimensional Algebraic Geometry},
			publisher={Springer},
			date={2001},
		}
		
		\bib{FGA05}{book}{
			editor={Fantechi, Barbara},
			editor={G\"ottsche, Lothar},
			editor={Illusie, Luc},
			editor={Kleiman, Steven L.},
			editor={Nitsure, Nitin},
			editor={Vistoli, Angelo},
			title={Fundamental Algebraic Geometry: {Grothendieck}'s {FGA} Explained},
			publisher={American Mathematical Society},
			date={2005},
		}
		
		\bib{Fuj12}{article}{
			author={Fujino, Osamu},
			title={Minimal model theory for log surfaces},
			journal={Publ. Res. Inst. Math. Sci.},
			volume={48},
			date={2012},
			pages={339--371},
			number={2},
		}
		
		\bib{Ful98}{book}{
			author={Fulton, William},
			title={Intersection Theory},
			publisher={Springer},
			date={1998},
		}
		
		\bib{GW20}{book}{
			author={G\"ortz, Ulrich},
			author={Wedhorn, Torsten},
			title={Algebraic Geometry {I}: Schemes},
			publisher={Springer},
			date={2020},
		}
		
		\bib{Gri70}{article}{
			author={Griffiths, Phillip A.},
			title={Periods of integrals on algebraic manifolds, {III}. Some global differential-geometric properties of the period mapping},
			journal={Publ. Math. Inst. Hautes \'Etudes Sci.},
			volume={38},
			date={1970},
			pages={125--180},
		}
		
		\bib{EGAIV}{article}{
			author={Grothendieck, A.},
			author={Dieudonn\'e, J. A.},
			title={\'El\'ements de g\'eom\'etrie alg\'ebrique: {IV}. \'Etude locale des sch\'emas et des morphismes de sch\'emas. Premi\`ere partie},
			journal={Publ. Math. Inst. Hautes \'Etudes Sci.},
			volume={20},
			date={1964},
			pages={5--259},
		}
		
		\bib{HX15}{article}{
			author={Hacon, C. D.},
			author={Xu, C.},
			title={Boundedness of log {Calabi--Yau} pairs of {Fano} type},
			journal={Math. Res. Lett.},
			volume={22},
			date={2015},
			pages={1699--1716},
			number={6},
		}
		
		\bib{Har77}{book}{
			author={Hartshorne, R.},
			title={Algebraic Geometry},
			publisher={Springer},
			date={1977},
		}
		
		\bib{IS05}{article}{
			author={Iskovskikh, V. A.},
			author={Shokurov, V. V.},
			title={Birational models and flips},
			journal={Russ. Math. Surv.},
			volume={60},
			date={2005},
			pages={27--94},
			number={1},
		}
		
		\bib{Kaw85}{article}{
			author={Kawamata, Y.},
			title={Minimal models and the {Kodaira} dimension of algebraic fiber spaces},
			journal={J. Reine Angew. Math.},
			volume={363},
			date={1985},
			pages={1--46},
		}
		
		\bib{Kaw97}{article}{
			author={Kawamata, Yujiro},
			title={On the cone of divisors of {Calabi--Yau} fiber spaces},
			journal={Internat. J. Math.},
			volume={8},
			date={1997},
			pages={665--687},
			number={5},
		}
		
		\bib{Kaw08}{article}{
			author={Kawamata, Yujiro},
			title={Flops connect minimal models},
			journal={Publ. Res. Inst. Math. Sci.},
			volume={44},
			date={2008},
			pages={419--423},
			number={2},
		}
		
		\bib{Kol96}{book}{
			author={Koll\'ar, J.},
			title={Rational Curves on Algebraic Varieties},
			publisher={Springer},
			date={1996},
		}
		
		\bib{Kol23}{book}{
			author={Koll\'ar, J\'anos},
			title={Families of Varieties of General Type},
			publisher={Cambridge University Press},
			date={2023},
		}
		
		\bib{Kol25}{article}{
			author={Koll\'ar, J.},
			title={N\'eron models, minimal models, and birational group actions},
			journal={Pure Appl. Math. Q.},
			volume={22},
			date={2026},
			pages={647--673},
			number={2},
		}
		
		\bib{KM98}{book}{
			author={Koll\'{a}r, J.},
			author={Mori, S.},
			title={Birational Geometry of Algebraic Varieties},
			publisher={Cambridge University Press},
			date={1998},
		}
		
		\bib{Laz04}{book}{
			author={Lazarsfeld, Robert},
			title={Positivity in Algebraic Geometry {I}},
			publisher={Springer},
			date={2004},
		}
		
		\bib{Mat02}{book}{
			author={Matsuki, Kenji},
			title={Introduction to the {Mori} Program},
			publisher={Springer},
			date={2002},
		}
		
		\bib{Mil17}{book}{
			author={Milne, J. S.},
			title={Algebraic Groups},
			publisher={Cambridge University Press},
			date={2017},
		}
		
		\bib{NN81}{article}{
			author={Narasimhan, M. S.},
			author={Nori, M. V.},
			title={Polarisations on an abelian variety},
			journal={Proc. Indian Acad. Sci. Math. Sci.},
			volume={90},
			date={1981},
			pages={125--128},
			number={2},
		}
		
		\bib{Ser06}{book}{
			author={Sernesi, E.},
			title={Deformations of Algebraic Schemes},
			publisher={Springer},
			date={2006},
		}
		
		\bib{Ste85}{article}{
			author={Sterk, H.},
			title={Finiteness results for algebraic {K3} surfaces},
			journal={Math. Z.},
			volume={189},
			date={1985},
			pages={507--513},
		}
		
		\bib{Tot10}{article}{
			author={Totaro, B.},
			title={The cone conjecture for {Calabi--Yau} pairs in dimension 2},
			journal={Duke Math. J.},
			volume={154},
			date={2010},
			pages={241--263},
			number={2},
		}
		
		\bib{Xu25}{misc}{
			author={Xu, F.},
			title={On finiteness of fiber space structures of klt Calabi--Yau pairs in dimension 3},
			date={2025},
			note={\href{https://arxiv.org/abs/2501.10239}{arXiv:2501.10239}}
		}
		
		\bib{Zha09}{article}{
			author={Zhang, De-Qi},
			title={Dynamics of automorphisms on projective complex manifolds},
			journal={J. Differ. Geom.},
			volume={82},
			date={2009},
			pages={691--722},
			number={3},
		}
	\end{biblist}
\end{bibdiv}

\end{document}